\newcommand{\bbN}{\mathbb{N}}
\newcommand{\bbR}{\mathbb{R}}
\newcommand{\calB}{\mathcal{B}}
\newcommand{\calM}{\mathcal{M}}
\newcommand{\calP}{\mathcal{P}}
\newcommand{\id}{I}
\DeclareMathOperator{\linSpan}{span}
\newcommand{\modulus}[1]{\left\vert #1 \right\rvert}
\DeclareMathOperator{\Bands}{Bands}
\DeclareMathOperator{\ProjBands}{PrBands}
\DeclareMathOperator{\BandProj}{BandPr}
\theoremstyle{definition}
\newtheorem{definition}{Definition}[section]
\newtheorem{remarks}[definition]{Remarks}
\newtheorem{example}[definition]{Example}
\newtheorem{examples}[definition]{Examples}
\theoremstyle{plain}
\newtheorem{proposition}[definition]{Proposition}
\newtheorem{lemma}[definition]{Lemma}
\newtheorem{theorem}[definition]{Theorem}
\newtheorem{corollary}[definition]{Corollary}
\begin{document}

\title[Disjointness, band and projections]{On disjointness, bands and projections in partially ordered vector spaces}
\author{Jochen Gl\"uck}
\address{Jochen Gl\"uck, Fakultät für Informatik und Mathematik, Universität Passau, Innstraße 33, D-94032 Passau, Germany}
\email{jochen.glueck@uni-passau.de}
\date{\today}

\subjclass[2010]{06F20, 46A40, 46B40}

\keywords{Ordered vector space, pre-Riesz space, band, projection, disjointness}

\begin{abstract}
	Disjointness, bands, and band projections are a classical and essential part of the structure theory of vector lattices. If $X$ is such a lattice, those notions seem -- at first glance -- intimately related to the lattice operations on $X$. The last fifteen years, though, have seen an extension of all those concepts to a much larger class of ordered vector spaces.
	
	In fact if $X$ is an Archimedean ordered vector space with generating cone, or a member of the slightly larger class of pre-Riesz spaces,  then the notions of disjointness, bands and band projections can be given proper meaning and give rise to a non-trivial structure theory.
	
	The purpose of this note is twofold: (i) We show that, on any pre-Riesz space, the structure of the space of all band projections is remarkably close to what we have in the case of vector lattices. In particular, this space is a Boolean algebra. (ii) We give several criteria for a pre-Riesz space to already be a vector lattice. These criteria are coined in terms of disjointness and closely related concepts, and they mark how lattice-like the order structure of pre-Riesz spaces can get before the theory collapses to the vector lattice case.
\end{abstract}

\maketitle

\section{Introduction} \label{section:introduction}

\subsection*{Disjointness}

Two elements $x$ and $y$ of a vector lattice $X$ are called \emph{disjoint} if $\modulus{x} \land \modulus{y} = 0$ -- a notion that is well-motivated by the case where $X$ is one of the classical function spaces such as $L^p$. A straightforward generalisation to ordered vector spaces that are not lattices seems to be difficult at first glance, as there is no obvious replacement of the modulus of $x$ and $y$. Van Gaans and Kalauch, though, observed more than a decade ago \cite{Gaans2006} that one can circumvent this obstacle by noting that any two elements $x$ and $y$ of a vector lattice $X$ are disjoint if and only if $\modulus{x+y} = \modulus{x-y}$, and that this is in turn true if and only if the sets $\{x+y,-x-y\}$ and $\{x-y,y-x\}$ have the same set of upper bounds. The latter property clearly allows a generalisation to other ordered vector spaces, which gives rise to the following definition.

Let $(X,X_+)$ be an \emph{ordered vector space}, by which we mean that $X$ is a real vector space and $X_+ \subseteq X$ is a non-empty subset of $X$ which satisfies $X_+ \cap (-X_+) = \{0\}$ and $\alpha X_+ + \beta X_+ \subseteq X_+$ for all scalars $\alpha,\beta \in [0,\infty)$ (we call $X_+$ the \emph{positive cone} in $X_+$). Two elements $x,y \in X$ are called \emph{disjoint} if both sets $\{x+y,-x-y\}$ and $\{x-y,y-x\}$ have the same set of upper bounds in $X$. We use the notation $x \perp y$ to denote that $x$ and $y$ are disjoint. Note that $x \perp y$ if and only if $y \perp x$, and that $x \perp x$ if and only if $x = 0$.

If $x,y \in X$ are both \emph{positive} -- i.e., $x,y \in X_+$ -- then one can prove that $x \perp y$ if and only if the infimum of $x$ and $y$ in $X$ exists and is equal to $0$; see \cite[Proposition~2.1]{Glueck2019}.

\subsection*{Disjoint complements and pre-Riesz spaces}

Let $(X,X_+)$ be an ordered vector space and let $S \subseteq X$. The set
\begin{align*}
	S^\perp := \{x \in X: \; x \perp s \text{ for all } s \in S\}
\end{align*}
is called the \emph{disjoint complement} of $S$. We note that $S_1^\perp \supseteq S_2^\perp$ whenever $S_1$ and $S_2$ are two subsets of $X$ such that $S_1 \subseteq S_2$.

From the theory of vector lattices we would expect $S^\perp$ to always be a vector subspace of $X$ -- but it turns out that one can construct examples of ordered spaces where this is not true (see for instance \cite[Example~4.3]{Gaans2006}). On the other hand though, such counterexamples are somewhat pathological: in fact, one can show that $S^\perp$ is always a vector subspace of $X$ if the cone $X_+$ is \emph{generating} in $X$ (i.e., $X = X_+ - X_+$) and $X$ is \emph{Archimedean} (i.e., $nx \le y$ for all $n \in \bbN := \{1,2, \dots\}$ implies $x \le 0$ whenever $x,y$ are two vectors in $X$).

There is also the slightly more general class of \emph{pre-Riesz spaces} that is relevant in this context: an ordered vector space $(X,X_+)$ is called a pre Riesz space if for every non-empty finite set $A \subseteq X$ and every $x \in X$ the following implication is true: if the set of upper bounds of $x+A$ is contained in the set of upper bounds of $A$, then $x \in X_+$. This concept was introduced by  van Haandel in \cite[Definition~1.1(viii)]{Haandel1993}.

If $(X,X_+)$ is a pre-Riesz space and $S \subseteq X$, then the disjoint complement $S^\perp$ is always a vector subspace of $X$; see \cite[Corollary~2.2 and~Section~3]{Gaans2006}. Moreover, we note that every pre-Riesz space has generating cone and that, conversely, every ordered vector space which has generating cone and is, in addition, Archimedean, is a pre-Riesz space \cite[Theorem~3.3]{Gaans2006}. 

The theory of pre-Riesz spaces has undergone a considerable development over the last 15 years. References to papers that deal with bands and projection bands on pre-Riesz spaces are given at the beginning of Sections~\ref{section:bands} and~\ref{section:band-projections}. Further contributions to the theory of pre-Riesz spaces can be found in \cite{Gaans2010, Kalauch2014, Kalauch2014a, Kalauch2019a} and, with a focus on operator theory, in \cite{Kalauch2018, Kalauch2019c, Kalauch2019d, KalauchPreprint5}. The present state of the art in the theory of pre-Riesz spaces is presented in the recent monograph \cite{Kalauch2019}.

\subsection*{Organisation of the paper} In the rest of the introduction we recall a bit more terminology and a simple result about disjointness. In Section~\ref{section:bands} we recall how a band is defined in a pre-Riesz space, and we show a few elementary results about the structure of the set of all bands. In Section~\ref{section:band-projections} we discuss projection bands and band projections. We show, among other things, that the band projections on a pre-Riesz space constitute a Boolean algebra and that, under appropriate assumptions on the space, the intersection of arbitrarily many projection bands is again a projection band. In the final Section~\ref{section:characterisations-of-vector-lattices} we give various sufficient conditions for a pre-Riesz space to be a vector lattice; these conditions are related to several variations of the notion \emph{disjointness}.

\subsection*{Setting the stage}

Throughout the rest of the paper, let $(X,X_+)$ be a pre-Riesz space. 

By an \emph{operator} on $X$ we always mean a linear map $X \to X$, and by a \emph{projection} on $X$ we always mean a linear projection $X \to X$.

We use standard terminology and notation from the theory of ordered vector spaces (which has, to some extent, already been employed above). In particular, we write $x \le y$ (or $y \ge x$) for $x,y \in X$ if $y-x \in X_+$ and we note that the relation $\le$ is a partial order on $X$ which is compatible with the vector space structure. Elements of $X_+$ will be called \emph{positive}. For $x,z \in X$ we denote the \emph{order interval} between $x$ and $z$ by
\begin{align*}
	[x,z] := \{y \in X: \; x \le y \le z\}.
\end{align*}
A linear map $T: X \to X$ is said to be \emph{positive}, which we denote by $T \ge 0$, if $TX_+ \subseteq X_+$, and for two linear maps $S,T: X \to X$ we write $S \le T$ if $T-S \ge 0$.

For each vector subspace $V \subseteq X$ we set $V_+ := V \cap X_+$, and we say that $V$ has \emph{generating cone} or that $V$ is \emph{directed} if $V = V_+ - V_+$. The following simple proposition is quite useful.

\begin{proposition} \label{prop:orthogonal-subspaces-by-positive-parts}
	Let $V,W \subseteq X$ be vector subspaces of $X$ with generating cone. If $V_+ \perp W_+$, then $V \perp W$.
\end{proposition}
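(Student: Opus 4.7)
The plan is to leverage the fact, cited earlier in the introduction from \cite{Gaans2006}, that in a pre-Riesz space the disjoint complement $S^\perp$ of any subset $S$ is a vector subspace of $X$. Combined with the hypothesis that $V$ and $W$ are generated by their positive parts, this reduces the proposition to a straightforward linearity argument.

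More concretely, I would first fix an arbitrary $w \in W_+$ and consider the disjoint complement $\{w\}^\perp$. This is a vector subspace of $X$ by the result cited in the introduction. By hypothesis, every element of $V_+$ lies in $\{w\}^\perp$, and since $V$ has generating cone we have $V = V_+ - V_+ \subseteq \linSpan V_+ \subseteq \{w\}^\perp$. Thus every $v \in V$ satisfies $v \perp w$. As $w \in W_+$ was arbitrary, this shows $W_+ \subseteq \{v\}^\perp$ for every $v \in V$.

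Next I would repeat the same argument with the roles reversed: fix $v \in V$, use that $\{v\}^\perp$ is a vector subspace containing $W_+$, and conclude from $W = W_+ - W_+$ that $W \subseteq \{v\}^\perp$. Since $v \in V$ was arbitrary, every $v \in V$ and every $w \in W$ are disjoint, which is precisely $V \perp W$.

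There is no real obstacle here; the entire content of the proof is packaged into the cited subspace property of disjoint complements in pre-Riesz spaces, and the two-step symmetric extension from $V_+, W_+$ to $V, W$ is purely linear. The only thing to be careful about is to apply the subspace property twice (once for a fixed $w$, then once for a fixed $v$), rather than trying to manipulate $v = v_1 - v_2$ and $w = w_1 - w_2$ simultaneously, which would require an independent verification that $x \perp y$ is bi-additive in $(x,y)$.
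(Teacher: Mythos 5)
Your proof is correct and follows essentially the same two-step argument as the paper: both use that disjoint complements in a pre-Riesz space are vector subspaces, first to pass from $V_+$ to $V$ and then from $W_+$ to $W$ (the paper phrases this with the complements $(W_+)^\perp$ and $V^\perp$ rather than singleton complements, but the logic is identical).
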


Here we use the notation $A \perp B$ for two subsets $A,B \subseteq X$ if $a \perp b$ for each $a \in A$ and each $b \in B$.

\begin{proof}[Proof of Proposition~\ref{prop:orthogonal-subspaces-by-positive-parts}]
	We use that orthogonal complements in pre-Riesz spaces are always vector subspaces. Since $V_+ \subseteq (W_+)^\perp$ we conclude $V \subseteq (W_+)^\perp$. The latter inclusion is equivalent to $W_+ \subseteq V^\perp$, which in turn implies $W \subseteq V^\perp$.
\end{proof}

\section{Bands} \label{section:bands}

This section is in a sense prologue to our main results in Sections~\ref{section:band-projections} and~\ref{section:characterisations-of-vector-lattices}. We briefly recall some basics about bands (Subsection~\ref{subsection:basics-on-bands}), we show that the collection of all bands in $X$ is a complete lattice with respect to set inclusion (Subsection~\ref{subsection:the-lattice-of-all-bands}) and we briefly discuss how the sum of two bands can be computed under certain assumptions (Subsection~\ref{subsection:the-sum-of-two-bands}).

Bands in pre-Riesz spaces were first defined in \cite[Section~5]{Gaans2006} and were further studied in \cite{Gaans2008, Gaans2008a, Kalauch2015, Kalauch2019b}.

\subsection{Basics} \label{subsection:basics-on-bands}

For $S \subseteq X$ we use the notation $S^{\perp\perp} := (S^\perp)^\perp$. Of course, we always have $S \subseteq S^{\perp \perp}$. 

A subset $B \subseteq X$ is called a \emph{band} if $B = B^{\perp \perp}$. For every set $S \subseteq X$ the disjoint complement $S^\perp$ is a band \cite[Proposition~5.5(ii)]{Gaans2006}. As a consequence, for each $S \subseteq X$ the set $S^{\perp \perp}$ is the smallest band in $X$ that contains $S$. Since $(X,X_+)$ is a pre-Riesz space, every band in $X$ is a vector subspace of $X$. Note that if $B$ is a band in $X$ and $0 \le x \le b$ for $x \in X$ and $b \in B$, then we also have $x \in B$.

In the classical theory of vector lattices, the concept of bands is of outstanding importance. For the convenience of the reader we recall a few examples of bands in vector lattices.

\begin{examples} \label{ex:bands-in-function-spaces}
	(a) Let $(\Omega,\mu)$ be a $\sigma$-finite measure space, let $p \in [1,\infty]$ and let $X = L^p(\Omega,\mu)$ with the standard cone. If $A \subseteq \Omega$ is a measurable set, then
	\begin{align*}
		B & = \{f \in X: \; \text{there is a representative of $f$ that vanishes a.\ e.\ on } A\} \\
		  & = \{f \in X: \; \text{every representative of $f$ vanishes a.\ e.\ on } A\}
	\end{align*}
	is a band in $X$, and in fact all bands in $X$ are of this form.
	
	(b) Let $X = C([0,1])$ be the space of continuous real-valued functions on $[0,1]$ and let $0 \le a \le 1$. Then
	\begin{align*}
		B_a = \{f \in X: \; f \text{ vanishes on } [a,1]\}
	\end{align*}
	is a band in $X$ (this example is further discussed in \cite[Example~5 on p.\,63]{Schaefer1974}).
	A general description of bands in spaces of continuous functions over compact sets can, for instance, be found in \cite[Proposition~1.3.13]{Kalauch2019}.
\end{examples}

Interesting examples for bands in a non-lattice ordered pre-Riesz space can for instance be found in the space $\bbR^3$ ordered by the so-called \emph{four ray cone}:

\begin{example} \label{ex:four-ray-cone-bands}
	Let $X = \bbR^3$ and $X_+ := \{\sum_{k=1}^4 \alpha_k v_k: \; \alpha_1,\dots,\alpha_4 \in [0,\infty)\}$, where
	\begin{align*}
		v_1 =
		\begin{pmatrix}
			1 \\ 0 \\ 1
		\end{pmatrix},
		\quad 
		v_2 = 
		\begin{pmatrix}
			0 \\ 1 \\ 1
		\end{pmatrix},
		\quad 
		v_3 =
		\begin{pmatrix}
			-1 \\ 0 \\ 1
		\end{pmatrix},
		\quad 
		v_4 =
		\begin{pmatrix}
			0 \\ -1 \\ 1
		\end{pmatrix}.
	\end{align*}
	The cone $X_+$ is called the \emph{four ray cone} in $\mathbb{R}^3$. All bands in $X$ are computed in \cite[Example~4.4.18]{Kalauch2019}. Besides the two trivial bands $\{0\}$ and $X$ there are four directed bands -- namely the lines spanned by $v_1,\dots,v_4$, respectively. Moreover, there exist two non-directed bands -- namely the lines spanned by
	\begin{align*}
		\begin{pmatrix}
			1 \\ 1 \\ 0
		\end{pmatrix}
		\quad \text{and} \quad 
		\begin{pmatrix}
			1 \\ -1 \\ 0
		\end{pmatrix},
	\end{align*}
	respectively.
\end{example}

\subsection{The lattice of all bands} \label{subsection:the-lattice-of-all-bands}

The following proposition shows that the intersection of any collection of bands in $X$ is again a band.

\begin{proposition} \label{prop:intersections-of-arbitrary-many-bands}
	The intersection of arbitrarily many bands $B_i$ in $X$ (where the indices $i$ are taken from a -- possibly empty -- index set $I$) is again a band, and it is given by
	\begin{align*}
		\bigcap_{i \in I} B_i = \left(\bigcup_{i \in I}B_i^\perp\right)^\perp.
	\end{align*}
\end{proposition}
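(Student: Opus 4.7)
The plan is to prove the displayed equality directly from the definitions, and then to deduce that the intersection is a band from the fact (cited earlier) that any disjoint complement is a band.

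First I would unpack what it means for each $B_i$ to be a band: by assumption $B_i = B_i^{\perp\perp} = (B_i^\perp)^\perp$. So membership $x \in B_i$ is equivalent to $x \perp y$ for every $y \in B_i^\perp$. Taking the intersection over $i \in I$, an element $x$ lies in $\bigcap_{i \in I} B_i$ if and only if, for every $i \in I$, $x \perp y$ holds for all $y \in B_i^\perp$; equivalently, $x \perp y$ for all $y \in \bigcup_{i \in I} B_i^\perp$. This is precisely the statement that $x \in \bigl(\bigcup_{i \in I} B_i^\perp\bigr)^\perp$. This chain of equivalences gives the displayed formula.

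Once the equality is established, the fact that the intersection is itself a band is immediate: the right-hand side is the disjoint complement of a subset of $X$, and disjoint complements are always bands by \cite[Proposition~5.5(ii)]{Gaans2006}.

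The only subtlety is the edge case $I = \emptyset$, which should be treated briefly for completeness: the empty intersection is $X$, the empty union is $\emptyset$, and $\emptyset^\perp = X$ (the disjointness condition is vacuous), while $X$ is indeed a band because $X^\perp = \{0\}$ (any $x \in X^\perp$ satisfies $x \perp x$, hence $x = 0$) and $\{0\}^\perp = X$. I do not anticipate any real obstacle here — the argument is essentially a set-theoretic manipulation of the defining quantifiers of $\perp$, and the only thing to watch is the handling of the empty index set.
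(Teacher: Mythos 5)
Your proof is correct and follows essentially the same route as the paper's: both establish the displayed identity (yours via a chain of equivalences unpacking $B_i = (B_i^\perp)^\perp$, the paper's via two inclusions) and then invoke the fact that disjoint complements are always bands. Your explicit treatment of the empty index set is a small additional care the paper omits, but the argument is the same.
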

\begin{proof}
	It suffices to prove the formula. If $x \in \bigcap_{i \in I} B_i$, then $x$ is disjoint to each set $B_i^\perp$, so $x$ is also disjoint to the union $\bigcup_{i \in I} B_i^\perp$. Conversely, fix $i_0 \in I$. Then $B_{i_0}^\perp \subseteq \bigcup_{i \in I} B_i^\perp$ and hence, $B_{i_0} = B_{i_0}^{\perp\perp} \supseteq \left(\bigcup_{i \in I}B_i^\perp\right)^\perp$.
\end{proof}

It is an immediate consequence of this proposition that the set of all bands in $X$ is a complete lattice with respect to set inclusion; let us state this explicitly in the following corollary.

\begin{corollary} \label{cor:lattice-of-bands}
	Let $\Bands(X)$ denote the set of all bands in $X$, ordered by set inclusion. Then every subset $\calB$ of $\Bands(X)$ has a supremum and an infimum in $\Bands(X)$, given by
	\begin{align*}
		\inf \calB = \cap \calB
	\end{align*}
	and
	\begin{align*}
		\sup \calB = \cap \big\{C \in \Bands(X): \,  C \supseteq B \text{ for all } B \in \calB \big\};
	\end{align*}
	in other words, $\Bands(X)$ is a complete lattice.
\end{corollary}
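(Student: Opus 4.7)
The plan is to observe that this corollary is essentially a restatement of Proposition~\ref{prop:intersections-of-arbitrary-many-bands} in lattice-theoretic language, together with the standard fact that any set with arbitrary infima (or suprema) in which a top element exists is automatically a complete lattice.

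First I would verify the formula for the infimum. Given a subset $\calB \subseteq \Bands(X)$, Proposition~\ref{prop:intersections-of-arbitrary-many-bands} tells us that $\cap \calB$ is again a band. By construction, $\cap \calB$ is contained in every member of $\calB$, and any band $D$ with $D \subseteq B$ for every $B \in \calB$ must satisfy $D \subseteq \cap \calB$. Hence $\cap \calB$ is the largest lower bound of $\calB$ in $\Bands(X)$.

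For the supremum I would consider the collection $\calU := \{C \in \Bands(X): C \supseteq B \text{ for all } B \in \calB\}$ of all upper bounds of $\calB$ in $\Bands(X)$. This collection is non-empty because $X$ itself belongs to $\calU$: indeed, since $x \perp x$ forces $x = 0$, we have $X^\perp = \{0\}$ and therefore $X^{\perp\perp} = \{0\}^\perp = X$, so $X$ is a band. By the already established formula for the infimum, $\cap \calU$ is a band, and by construction it contains every $B \in \calB$, so $\cap \calU \in \calU$. Moreover, any upper bound of $\calB$ in $\Bands(X)$ belongs to $\calU$ and therefore contains $\cap \calU$. Thus $\cap \calU$ is the least upper bound of $\calB$.

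The only conceivable obstacle is the non-emptiness of $\calU$ in the edge case $\calB = \emptyset$, but this is handled by the observation that $X$ is itself a band, which also ensures that the intersection in the formula for $\sup \calB$ is taken over a non-empty family (and in particular $\sup \emptyset = \{0\}$, consistent with $\{0\} = X^\perp$ being a band as well). Since both arbitrary infima and arbitrary suprema exist, $\Bands(X)$ is a complete lattice.
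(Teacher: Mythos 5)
Your proof is correct and follows exactly the route the paper intends: the paper gives no separate argument for this corollary, stating only that it is an immediate consequence of Proposition~\ref{prop:intersections-of-arbitrary-many-bands}, and your write-up supplies precisely the standard details (closure under arbitrary intersections gives infima, suprema arise as the intersection of the non-empty family of upper bounds, with $X$ itself a band handling the empty case).
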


\subsection{The sum of two bands} \label{subsection:the-sum-of-two-bands}

Even in the case of vector lattices, the sum of two bands need not be a band, in general. Let us illustrate this by means of the following simple example.

\begin{example} \label{ex:two-bands-whose-sum-is-not-a-band}
	Let $X = C([-1,1])$ denote the space of all continuous real-valued functions on the interval $[-1,1]$ and endow this space with the standard cone. Then the sets
	\begin{align*}
		B = \{f \in X: \; f|_{[-1,0]} = 0\} \quad \text{and} \quad C = \{f \in X: \; f|_{[0,1]} = 0\} 
	\end{align*}
	are bands in $X$, but their sum $B + B = \{f \in X: \; f(0) = 0\}$ is not a band in $X$.
\end{example}

Another counterexample -- in a non-lattice ordered pre-Riesz space -- can be found in $\bbR^3$ endowed with the four ray cone from Example~\ref{ex:four-ray-cone-bands}. In this space, all non-trivial bands are one-dimensional; hence, the sum of two distinct non-trivial bands cannot be a band in this space.

If, however, the sum of two bands $B$ and $C$ is a band, then we can can compute it be means of the formula $B+C = (B^\perp \cap C^\perp)^\perp$; this is part of the following proposition.

\begin{proposition} \label{prop:sum-of-two-bands}
	Let $B, C \subseteq X$ be bands.
	\begin{enumerate}[\upshape (a)]
		\item We have $B \cap C = (B^\perp + C^\perp)^\perp$.
		
		\item We have $B+C = (B^\perp \cap C^\perp)^\perp$ if (and only if) $B+C$ is a band.
		
		\item More generally than~\upshape{(b)}, we always have
		\begin{align*}
			B+C \subseteq (B+C)^{\perp \perp} = (B^\perp \cap C^\perp)^\perp.
		\end{align*}
	\end{enumerate}
\end{proposition}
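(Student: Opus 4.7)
The plan is to derive all three parts from a single auxiliary identity,
\[
	(B+C)^\perp = B^\perp \cap C^\perp,
\]
which holds whenever $B$ and $C$ are vector subspaces of a pre-Riesz space and, in particular, whenever they are bands. This identity is essentially the content of (c), and both (a) and (b) will reduce to it.

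First I would establish the auxiliary identity. The inclusion ``$\subseteq$'' is immediate: any $x$ that is disjoint from every element of $B+C$ is, in particular, disjoint from $B$ (take $c = 0 \in C$) and from $C$. For the reverse inclusion I would invoke the fact, recalled in the introduction, that in a pre-Riesz space every singleton disjoint complement $\{x\}^\perp$ is a vector subspace of $X$. Hence if $x \in B^\perp \cap C^\perp$, then $B \cup C \subseteq \{x\}^\perp$ and, since $\{x\}^\perp$ is closed under addition, also $B + C \subseteq \{x\}^\perp$, i.e.\ $x \in (B+C)^\perp$.

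Granting the auxiliary identity, part (c) follows at once: the inclusion $B + C \subseteq (B+C)^{\perp\perp}$ holds for any subset, and taking disjoint complements on both sides of the identity yields $(B+C)^{\perp\perp} = (B^\perp \cap C^\perp)^\perp$. Part (b) is then a routine consequence of (c): if $B + C$ is a band then $B + C = (B+C)^{\perp\perp} = (B^\perp \cap C^\perp)^\perp$ by (c); conversely, if this equation holds then $B+C$ is itself a disjoint complement, hence a band by the fact recalled in Subsection~\ref{subsection:basics-on-bands}. For (a) I would apply the auxiliary identity with $B$ and $C$ replaced by the bands $B^\perp$ and $C^\perp$; this gives $(B^\perp + C^\perp)^\perp = B^{\perp\perp} \cap C^{\perp\perp} = B \cap C$, where the last equality uses that $B$ and $C$ are bands.

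No serious obstacle is anticipated. The only non-trivial input is the pre-Riesz property, which ensures both that singleton disjoint complements are vector subspaces (used in the reverse inclusion of the auxiliary identity) and that disjoint complements of arbitrary subsets are bands (used in the ``only if'' direction of (b)). Both facts have already been recorded in the excerpt, so the remainder of the argument is a clean bookkeeping exercise in manipulating $(\,\cdot\,)^\perp$.
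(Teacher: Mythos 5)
Your proof is correct and rests on exactly the same two ingredients as the paper's: the observation that an element disjoint from a union of subspaces is disjoint from their sum (because $\{x\}^\perp$ is a vector subspace in a pre-Riesz space), and the substitution of $B^\perp, C^\perp$ for $B, C$ to pass between the identities in (a) and (c). The only difference is the order of derivation — you prove $(B+C)^\perp = B^\perp \cap C^\perp$ first and deduce (a) from it, whereas the paper proves (a) first and deduces (c) — which is immaterial.
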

\begin{proof}
	(a) According to Proposition~\ref{prop:intersections-of-arbitrary-many-bands} we have $B \cap C = (B^\perp \cup C^\perp)^\perp$, and the latter set clearly contains $(B^\perp + C^\perp)^\perp$. On the other hand, if $x \in X$ is disjoint to $B^\perp \cup C^\perp$, then it is also disjoint to $B^\perp + C^\perp$ since the disjoint complement of $\{x\}$ is a vector subspace of $X$; this shows that we also have $(B^\perp \cup C^\perp)^\perp \subseteq (B^\perp + C^\perp)^\perp$.
	
	(c) It follows from~(a) that
	\begin{align*}
		B^\perp \cap C^\perp = (B^{\perp \perp} + C^{\perp \perp})^\perp = (B+C)^\perp,
	\end{align*}
	so $(B^\perp \cap C^\perp)^\perp = (B+C)^{\perp\perp}$.
	
	(b) This is an immediate consequence of~(c).
\end{proof}

The main point of the above proposition -- and the reason for the title of this subsection -- is assertion~(b). Anyway, we chose to include assertion~(a) in the same proposition in order to have an immediate comparison between~(a) and~(b).

We point out that the assumption of~(b) that $B+C$ be a band is automatically satisfied of both $B$ and $C$ are projection bands; see Proposition~\ref{prop:sum-of-projection-bands} below. On the other hand, Example~\ref{ex:two-bands-whose-sum-is-not-a-band} shows that there are situations in which $B+C$ is not a band -- and in this case the formula from Proposition~\ref{prop:sum-of-two-bands} necessarily fails.

\section{Band projections} \label{section:band-projections}

Band projections (and, accordingly, projection bands) in pre-Riesz spaces are a main subject of study in \cite{KalauchPreprint4, Glueck2019}. In this section we further develop their theory.

\subsection{Basics} \label{subsection:basics-on-band-projections}

If $B$ is a band in $X$, then it intersects its orthogonal band $B^\perp$ only in $0$. However, the sum of $B$ and $B^\perp$ can be smaller than the entire space $X$, in general; this happens, for instance, in Example~\ref{ex:two-bands-whose-sum-is-not-a-band}, where $C = B^\perp$.

We call a subset $B \subseteq X$ a \emph{projection band} if $B$ is a band and if, in addition, $X = B \oplus B^\perp$. It is not difficult to see that a band $B$ is a projection band if and only if $B^\perp$ is a projection band. Every projection band $B$ has generating cone according to \cite[Proposition~2.5]{Glueck2019}.

The notion of a projection band also gives rise to the following definition: a linear projection $P: X \to X$ is called a \emph{band projection} if there exists a projection band $B \subseteq X$ such that $P$ is the projection onto $B$ along $B^\perp$. In other words, $P$ is a band projection if and only if $PX$ is a projection band and $\ker P$ equals the disjoint complement of $PX$.

The following proposition contains various characterisations of band projections.

\begin{proposition} \label{prop:characterisation-of-band-projections}
	For every linear projection $P: X \to X$ the following assertions are equivalent:
	\begin{enumerate}[\upshape (i)]
		\item $P$ is a band projection.
		\item $\ker P = (PX)^\perp$.
		\item $PX = (\ker P)^\perp$.
		\item $PX \perp \ker P$.
		\item Both projections $P$ and $\id - P$ are positive.
		\item $\id - P$ is a band projection.
	\end{enumerate}
\end{proposition}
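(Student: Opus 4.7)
The plan is to establish the cycle of implications (i) $\Rightarrow$ (ii) $\Rightarrow$ (iii) $\Rightarrow$ (iv) $\Rightarrow$ (i), together with the separate equivalences (i) $\Leftrightarrow$ (v) and (i) $\Leftrightarrow$ (vi). The implication (i) $\Rightarrow$ (ii) is immediate from the definition of band projection, since the projection band underlying $P$ must equal $PX$ and its disjoint complement must equal $\ker P$. For the remaining three implications in the cycle I would rely on two elementary facts: every linear projection satisfies $X = PX \oplus \ker P$, and for any vector subspace $V \subseteq X$ we have $V \cap V^\perp = \{0\}$, since $x \in V \cap V^\perp$ forces $x \perp x$ and hence $x = 0$. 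Each step then proceeds by taking an arbitrary element $x$ in the relevant disjoint complement, writing $x = Px + (I-P)x$, and observing that one of the summands must lie simultaneously in a subspace and in its disjoint complement, whence it vanishes. For instance, in passing from (iv) back around to (i), one checks along the way that $(\ker P)^\perp \subseteq PX$ and that $(PX)^\perp \subseteq \ker P$, which forces $PX = (\ker P)^\perp = (PX)^{\perp\perp}$ to be a band with $X = PX \oplus (PX)^\perp$, so that $PX$ is indeed a projection band.

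The equivalence (i) $\Leftrightarrow$ (vi) then comes for free by applying the already established equivalence (i) $\Leftrightarrow$ (ii) to the projection $I-P$, whose range is $\ker P$ and whose kernel is $PX$: condition (ii) for $I-P$ reads $PX = (\ker P)^\perp$, which is condition (iii) for $P$. Hence (vi) $\Leftrightarrow$ (iii) $\Leftrightarrow$ (i).

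For (i) $\Leftrightarrow$ (v), my plan is to prove (v) $\Rightarrow$ (iv) as follows. Under the assumption $P, I-P \ge 0$, both $PX$ and $\ker P$ inherit a generating cone from $X$: for $x = x_1 - x_2$ with $x_j \in X_+$, we have $Px = Px_1 - Px_2$ with $Px_j \in (PX)_+$, and analogously for $\ker P = (I-P)X$. Proposition~\ref{prop:orthogonal-subspaces-by-positive-parts} then reduces the task to verifying $(PX)_+ \perp (\ker P)_+$, and by the characterisation of disjointness between positive elements recalled in the introduction, this amounts to showing $\inf(u,v) = 0$ in $X$ for all $u \in (PX)_+$ and $v \in (\ker P)_+$. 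I would do this by taking an arbitrary lower bound $w \in X$ of $\{u,v\}$ and applying $P$ and $I-P$ separately: the estimates $Pw \le Pu = u$ and $Pw \le Pv = 0$ yield $Pw \le 0$, the analogous reasoning gives $(I-P)w \le 0$, and summing produces $w \le 0$, as desired.

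The main obstacle is the remaining direction (i) $\Rightarrow$ (v): positivity of band projections in pre-Riesz spaces is not apparent directly from the definition, since the decomposition $x = Px + (I-P)x$ of a positive $x$ a priori gives no control on the signs of the two summands. I would invoke this as a known result from \cite{Glueck2019}, cited at the opening of Section~\ref{section:band-projections} as one of the main references for projection bands in the pre-Riesz setting.
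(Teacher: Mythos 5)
Your proof is correct, but it closes the loop differently from the paper. The paper's proof establishes (i) $\Rightarrow$ (ii), (iii) $\Rightarrow$ (iv) directly, then proves (iv) $\Rightarrow$ (v) by appealing to the fact that two disjoint vectors summing to a positive vector are themselves positive (\cite[Proposition~2.4(a)]{Glueck2019}), and relies on the cited equivalence (i) $\Leftrightarrow$ (v) from \cite[Theorem~3.2]{Glueck2019} to return to (i); so positivity is the bridge back to ``band projection''. You instead close the cycle purely algebraically: your implication (iv) $\Rightarrow$ (i) uses only that disjoint complements are vector subspaces, that $V \cap V^\perp = \{0\}$, and the decomposition $x = Px + (\id-P)x$, to force $\ker P = (PX)^\perp$ and $PX = (\ker P)^\perp = (PX)^{\perp\perp}$, whence $PX$ is a projection band. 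This is a clean, self-contained argument that the paper does not give. For (v) you then prove (v) $\Rightarrow$ (iv) from scratch via Proposition~\ref{prop:orthogonal-subspaces-by-positive-parts} and the infimum characterisation of disjointness for positive elements (the lower-bound argument $w = Pw + (\id-P)w \le 0$ is correct), so that only the single direction (i) $\Rightarrow$ (v) is delegated to \cite{Glueck2019} --- the same source the paper cites for the full equivalence. Your derivation of (vi) by applying condition (ii) to $\id - P$ is also valid and matches the spirit, though not the letter, of the paper's argument. In short: your route is somewhat longer but more self-contained on the directions (iv) $\Rightarrow$ (i) and (v) $\Rightarrow$ (i); the paper's is shorter because it outsources more to the references.
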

\begin{proof}
	``(i) $\Leftrightarrow$ (v)'' This equivalence was proved in \cite[Theorem~3.2]{Glueck2019}. 
	
	``(i) $\Leftrightarrow$ (vi)'' This equivalence follows from the fact that a band $B$ is a projection band if and only if $B^\perp$ is a projection band (alternatively, it follows immediately from the equivalence of (i) and (v)).
	
	``(i) $\Rightarrow$ (ii)'' and ``(i) $\Rightarrow$ (iii)'' These implications follow immediately from the definition of a band projection.
	
	``(ii) $\Rightarrow$ (iv)'' and ``(iii) $\Rightarrow$ (iv)'' These implications are obvious.
	
	``(iv) $\Rightarrow$ (v)'' Let $x \in X_+$. Then $Px$ and $(\id - P)x$ are disjoint and sum up to $x$, so it follows from \cite[Proposition~2.4(a)]{Glueck2019} that $Px$ and $(\id-P)x$ are positive, too. This shows~(v). 
\end{proof}

If $P$ is a band projection in $X$, then both the range and the kernel of $P$ are projection bands. In Corollary~\ref{cor:band-projection-if-range-and-kernel-are-projection-bands} below we will see that the converse implication is also true, which yields another characterisation of band projections.

We conclude this subsection with a few examples.

\begin{examples} \label{ex:examples-for-projection-bands}
	(a) If $X$ is a Dedekind complete vector lattice, then every band in $E$ is a projection band \cite[Theorem~II.2.10]{Schaefer1974}.
	
	(b) Let $(\Omega,\mu)$ be a $\sigma$-finite measure space and let $p \in [1,\infty]$. The bands in $L^p(\Omega,\mu)$ are described in Example~\ref{ex:bands-in-function-spaces}(a). Since $L^p(\Omega,\mu)$ is Dedekind complete, it follows from~(a) that each of these bands is actually a projection band.
	
	(c) The bands $B_a$ in $C([0,1])$ from Example~\ref{ex:bands-in-function-spaces}(b) are not projection bands unless $a = 0$ (see \cite[Example~5 on p.\,63]{Schaefer1974}). More generally, it is not difficult to see that, for a compact Hausdorff space $K$, there are no non-trivial projection bands in $C(K)$ if $K$ is connected.
	
	(d) Let $X$ and $Y$ be two pre-Riesz spaces and endow the product space $Z := X \times Y$ with the product order (i.e.\ $Z_+ = X_+ \times Y_+$). Then $Z$ is a pre-Riesz space, too, and $X$ and $Y$ -- which we identify with the subspaces $X \times \{0\}$ and $\{0\} \times Y$ of $Z$, respectively -- are projection bands in $Z$. Indeed, we have $X^\perp = Y$, and vice versa.
	
	On a related note, we will see in Theorem~\ref{thm:projection-band-structure-of-finite-dimensional-pre-riesz-spaces} below that every finite-dimensional pre-Riesz space can be written as the product of finitely many minimal projection bands.
	
	(e) If $X$ is a Banach lattice and we identify $X$ with a subspace of its bi-dual space $X''$ by means of evaluation, then $X$ is a band in $X''$ if and only if $X$ is a projection band in $X''$ if and only if $X$ is a so-called \emph{KB-space}. This class of space includes all reflexive Banach lattices and all $L^1$-spaces (over arbitrary measure spaces). For details we refer for instance to \cite[Section~2.4]{Meyer-Nieberg1991}.
\end{examples}

Example~\ref{ex:examples-for-projection-bands}(e) can be extended to also include spaces that are not lattice-ordered. An ordered vector space $(Y,Y_+)$ is called an \emph{ordered Banach space} if $Y$ carries a complete norm and $Y_+$ is closed. Note that the order in an ordered Banach space is always Archimedean. Hence, if $Y_+$ is, in addition, generating, then the ordered Banach space $(Y,Y_+)$ is a pre-Riesz space. 

Throughout the rest of the paper, we will tactily use some important concepts from the theory of ordered Banach spaces -- such as \emph{normality} of cones and the fact the the dual of an ordered Banach space with generating cone is again an ordered Banach space. For details about the theory of ordered Banach spaces we refer the reader for instance to the monograph \cite{Aliprantis2007}, in particular to Section~2.5 there.

\begin{example} \label{ex:ordered-banach-space-band-in-bidual}
	Assume that the pre-Riesz space $X$ is an ordered Banach space with normal cone. Then we can consider $X$ as a subspace of the bi-dual space $X''$ by means of evaluation.
	
	There are interesting examples where $X$ is not a Banach lattice and not reflexive, but yet a projection band in $X''$. This is, for instance, the case if $X$ is the pre-dual of a von Neumann algebra; see \cite[Proposition 1.17.7]{Sakai1971} or \cite[pp.\,126--127]{Takesaki1979}.
	
	Ordered Banach space that are projection bands in their bi-dual were employed in \cite{GlueckWolffLB} to study the long-term behaviour of positive operator semigroups.
\end{example}

One can easily find examples where a pre-Riesz space $X$ does not contain any projection bands except for $\{0\}$ and $X$ itself. One situation of this type has already been discussed in Example~\ref{ex:examples-for-projection-bands}(c) above. Here are two more examples.

\begin{examples} \label{ex:non-lattices-without-projection-bands}
	(a) Let us endow $X = \bbR^3$ with the four ray cone $X_+$ from Example~\ref{ex:four-ray-cone-bands}. Then every non-trivial band $B$ in $X_+$ is one-dimensional, so there is no non-trivial projection band in $X$.
	
	(b) Assume that $X$ is a so-called \emph{anti-lattice}, which means that any two vectors $x,y$ in $X$ have a supremum if and only if $x \ge y$ or $x \le y$. Then there are, according to \cite[Theorem~4.1.10(ii)]{Kalauch2019}, no non-trivial disjoint elements in $X_+$. Hence, there are no non-trivial projections bands in $X$.
	
	We note that a classical example of an anti-lattice is the space of all self-adjoint bounded linear operators on a Hilbert space; this result goes back to Kadison \cite[Theorem~6]{Kadison1951}.
\end{examples}

\subsection{The Boolean algebra of band projections} \label{subsection:the-boolean-algebra-of-band-projections}

In this section we study the structure of the collection of all band projections on $X$. As in the vector lattice case, this collection turns out to be a Boolean algebra (Theorem~\ref{thm:band-projections-form-boolean-algebra}).

We begin with the following proposition which shows that a band projection $Q$ dominates a band projection $P$ (in the sense of operators on the ordered vector space $X$) if and only if the range of $Q$ contains the range of $P$:

\begin{proposition} \label{prop:domination-of-band-projections}
	For two band projections $P$ and $Q$ on $X$ the following assertions are equivalent.
	\begin{enumerate}[\upshape (i)]
		\item $PX \subseteq	QX$.
		\item $QP = P$.
		\item $P \le Q$.
	\end{enumerate}
\end{proposition}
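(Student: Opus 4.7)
The plan is to establish the cycle (i) $\Rightarrow$ (ii) $\Rightarrow$ (iii) $\Rightarrow$ (i). The first equivalence is essentially formal: if $PX \subseteq QX$, then every vector $Px$ lies in the range of the projection $Q$, so $Q(Px) = Px$, which gives $QP = P$; conversely, if $QP = P$, then $Px = Q(Px) \in QX$ for each $x \in X$.

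The implication (ii) $\Rightarrow$ (iii) is where the real work happens. The key point I would exploit is a small symmetry: although $QP = P$ only asserts one-sided behaviour, it actually forces $PQ = P$ as well once we invoke the band structure. Indeed, $QP = P$ rewrites as $PX \subseteq QX$, and passing to disjoint complements (and using the characterisation $\ker P = (PX)^\perp$, $\ker Q = (QX)^\perp$ from Proposition~\ref{prop:characterisation-of-band-projections}(ii)) gives $\ker Q \subseteq \ker P$, hence $P(\id - Q) = 0$, i.e., $PQ = P$. With both identities in hand, for $x \in X_+$ I compute
\[
 (Q - P)x \;=\; Qx - PQx \;=\; (\id - P)(Qx),
\]
which is positive because $Q$ and $\id - P$ are both positive band projections (Proposition~\ref{prop:characterisation-of-band-projections}(v)). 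This yields $P \le Q$.

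For (iii) $\Rightarrow$ (i), let $x \in X_+$. Then $P$ is positive, so $Px \ge 0$, and the assumption $P \le Q$ gives $Qx - Px \ge 0$; thus $0 \le Px \le Qx \in QX$. Since $QX$ is a band and bands are solid between $0$ and any of their members (as recorded in Subsection~\ref{subsection:basics-on-bands}), we conclude $Px \in QX$. Hence $P(X_+) \subseteq QX$, and since the cone of the pre-Riesz space $X$ is generating, $PX = P(X_+) - P(X_+) \subseteq QX$.

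The main obstacle, as I see it, is the positivity step (ii) $\Rightarrow$ (iii): the identity $QP = P$ by itself does not visibly produce a positive element when one forms $Q - P$, and the decisive observation is that for \emph{band} projections the one-sided identity upgrades automatically to the two-sided identity $QP = P = PQ$, at which point the rewriting $(Q - P)x = (\id - P)(Qx)$ makes positivity transparent.
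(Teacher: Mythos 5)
Your proof is correct, but it takes a genuinely different route from the paper's in two of the three implications, so a short comparison is worthwhile. For (ii) $\Rightarrow$ (iii) the paper argues in one line: $P = QP \le Q \cdot \id = Q$, the inequality being the composition of the positive operators $Q$ and $\id - P$. Your argument first upgrades $QP = P$ to $PQ = P$ (via $\ker Q = (QX)^\perp \subseteq (PX)^\perp = \ker P$, using Proposition~\ref{prop:characterisation-of-band-projections}) and then factors $Q - P = (\id - P)Q$. This is sound, but the detour through the kernels is unnecessary: from $QP = P$ alone one can write $(Q-P)x = Q(\id-P)x \ge 0$, which is the mirror image of your computation and is exactly the paper's step. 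For closing the cycle, the paper proves (iii) $\Rightarrow$ (ii) purely by operator inequalities, $QP \le \id \cdot P = P = P^2 \le QP$, whereas you prove (iii) $\Rightarrow$ (i) order-theoretically: $0 \le Px \le Qx$ forces $Px$ into the band $QX$ by the solidity property recorded in Subsection~\ref{subsection:basics-on-bands}, and then $PX = PX_+ - PX_+ \subseteq QX$ since $X_+$ is generating. Both closures are valid; yours trades the paper's calculus of positive operators for the order-ideal property of bands, which makes the role of the band structure more visible but relies on one more background fact. There are no gaps.
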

\begin{proof}
	``(i) $\Leftrightarrow$ (ii)'' This can immediately be checked to be true for all projections on arbitrary vector spaces.
	
	``(ii) $\Rightarrow$ (iii)'' We have $P = QP \le Q \cdot \id = Q$.
	
	``(iii) $\Rightarrow$ (ii)'' We have $QP \le \id \cdot P = P = P^2 \le QP$, so $QP = P$.
\end{proof}

Next we describe the interaction of two arbitrary band projections on $X$ in a bit more detail; in particular, we prove that any two band projections commute.

\begin{proposition} \label{prop:interaction-of-band-projections}
	For two band projections $P$ and $Q$ on $X$ the following assertions hold:
	\begin{enumerate}[\upshape (a)]
		\item $P$ leaves the range of $Q$ invariant, and vice versa.
		\item $P$ and $Q$ commute.
		\item The mapping $PQ = QP$ is a band projection, too.
		\item We have $PQX = PX \cap QX$.
	\end{enumerate}
\end{proposition}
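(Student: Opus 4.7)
The plan is to establish part (a) first; parts (b), (c), and (d) then follow by short formal arguments using Proposition~\ref{prop:characterisation-of-band-projections}. For (a), I would invoke the two structural facts recalled earlier: every band is \emph{solid}, in the sense that $0 \le x \le b$ with $b \in B$ forces $x \in B$ (Subsection~\ref{subsection:basics-on-bands}), and every projection band is directed (Subsection~\ref{subsection:basics-on-band-projections}). Given $y \in (QX)_+$, positivity of both $P$ and $\id - P$ (Proposition~\ref{prop:characterisation-of-band-projections}(v)) yields $0 \le Py \le y$, so solidity of the band $QX$ forces $Py \in QX$. Since $QX$ is directed, linearity extends this to $P(QX) \subseteq QX$; interchanging the roles of $P$ and $Q$ yields the symmetric claim.

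For (b), I would decompose arbitrary $x \in X$ as $x = Qx + (\id - Q)x$. Part (a) gives $PQx \in QX$ and hence $QPQx = PQx$. Applying (a) with $Q$ replaced by the band projection $\id - Q$ (whose range is $\ker Q = (QX)^\perp$) gives $P(\ker Q) \subseteq \ker Q$, so $QP(\id - Q)x = 0$. Summing the two identities yields $QPx = PQx$. For (c), commutation forces $(PQ)^2 = P^2 Q^2 = PQ$, so $PQ$ is a linear projection. It is positive as a composition of positive operators, and positivity of $\id - PQ$ follows from
\[ \id - PQ = (\id - P) + P(\id - Q), \]
where both summands are positive by Proposition~\ref{prop:characterisation-of-band-projections}(v). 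Characterisation~(v) of that proposition then identifies $PQ$ as a band projection.

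For (d), the inclusion $PQX \subseteq PX \cap QX$ is immediate from (b), while the reverse inclusion is the one-line check that any $x \in PX \cap QX$ satisfies $Px = Qx = x$ and therefore $PQx = x \in PQX$. There is no genuine obstacle in this argument: everything rests on the observation in (a) that solidity of bands combined with directedness of projection bands forces $P$ to leave $QX$ invariant, after which the remaining parts are essentially formal consequences.
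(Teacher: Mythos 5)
Your proof is correct and follows essentially the same route as the paper's: part (a) rests on $0 \le Py \le y$ together with the directedness of the projection band $QX$, and (b)--(d) are then the same formal consequences (with the roles of $P$ and $Q$ in (b) merely interchanged, and the positivity of $\id - PQ$ in (c) argued via an equivalent decomposition). The only cosmetic difference is that in (a) you cite the solidity of bands recalled in Subsection~\ref{subsection:basics-on-bands} directly, whereas the paper re-derives it for $QX$ via disjoint complements using that $(QX)^\perp$ is directed; both arguments are valid here.
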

\begin{proof}
	(a) Let $0 \le x \in QX$. For each $0 \le z \in (QX)^\perp$ it follows from $0 \le Px \le x$ that $Px \perp z$ (see \cite[Proposition~2.2]{Glueck2019}); since the positive cone in the projection band $(QX)^\perp$ is generating in $(QX)^\perp$, we conclude that $Px \perp (QX)^\perp$. 
	
	Now we also use that the positive cone in the projection band $QX$ is generating in $QX$, which implies that $Px \perp (Qx)^\perp$ for each $x \in Qx$. Hence, $Px \in (Qx)^{\perp \perp} = QX$ for each $x \in QX$, which shows that $P$ leaves $QX$ invariant. By interchanging the roles of $P$ and $Q$ we also obtain that $Q$ leaves $PX$ invariant.
	
	(b) It follows from~(a) that $Q$ leaves both $PX$ and $(\id - P)X$ invariant. Thus,
	\begin{align*}
		PQP = QP \qquad \text{and} \qquad PQ(\id - P) = 0.
	\end{align*}
	The second equality is equivalent to $PQP = PQ$ which yields, in conjunction with the first equality, $QP = PQ$.
	
	(c) Clearly, $0 \le PQ \le \id \cdot \id = \id$, so it remains to show that $PQ$ is a projection. Since $P$ leaves $QX$ invariant, we know that $QPQ = PQ$, so $(PQ)^2 = P(QPQ) = P(PQ) = PQ$.
	
	(d) ``$\supseteq$'' For $x \in PX \cap QX$ we have $x = Px = PQx \in PQX$.
	
	``$\subseteq$'' If $x \in PQX$, then $x = PQx \in PX$ and $x = QPx \in QX$.
\end{proof}

We point out that assertion~(d) in the above proposition is in fact true for all commuting projection $P$ and $Q$ on an arbitrary vector space.

As a consequence of the fact that any two band projections commute we obtain the following proposition which shows, in particular, that the sum of two projection bands is a projection band (and a formula for such a sum can thus be found in Proposition~\ref{prop:sum-of-two-bands}(b) above).

\begin{proposition} \label{prop:sum-of-projection-bands}
	Let $P$ and $Q$ be band projections on $X$. Then $P+Q-PQ$ is a band projection, too, and its range coincides with the set $PX + QX$. In particular, the sum of two projection bands is a projection band.
\end{proposition}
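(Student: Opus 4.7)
Set $R := P + Q - PQ$. The key observation is the factorisation
\begin{align*}
	\id - R = (\id - P)(\id - Q),
\end{align*}
which is legitimate precisely because $P$ and $Q$ commute by Proposition~\ref{prop:interaction-of-band-projections}(b). The plan is to use this identity to verify the hypotheses of the equivalence ``(i) $\Leftrightarrow$ (v)'' in Proposition~\ref{prop:characterisation-of-band-projections}, and then to compute the range of $R$ directly.

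First I would check that $R$ is a projection. Since $\id - P$ and $\id - Q$ are themselves commuting idempotents (their commutation follows at once from that of $P$ and $Q$), their product $(\id - P)(\id - Q)$ is again an idempotent, so $\id - R$ is a projection and hence so is $R$. Next I would verify positivity of both $R$ and $\id - R$. For $\id - R$ this is immediate from the factorisation, since both factors $\id - P$ and $\id - Q$ are positive (as $P$ and $Q$ are band projections) and they commute, so their product maps $X_+$ into itself. For $R$ itself I would rewrite $R = P + (\id - P)Q$; the first summand is positive, and the second is the product of the two commuting positive operators $\id - P$ and $Q$, hence also positive. By Proposition~\ref{prop:characterisation-of-band-projections} it then follows that $R$ is a band projection.

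It remains to identify $RX$ with $PX + QX$. Using $PQ = QP$, a one-line calculation gives $RP = P + QP - PQP = P$ and $RQ = PQ + Q - PQ = Q$. Consequently every element of $PX$ and every element of $QX$ is fixed by $R$, so $PX + QX \subseteq RX$. For the reverse inclusion, for any $x \in X$ we have $Rx = Px + (\id - P)Qx \in PX + QX$, so $RX \subseteq PX + QX$. This yields $RX = PX + QX$. The ``in particular'' clause is then immediate: if $B = PX$ and $C = QX$ are the projection bands corresponding to $P$ and $Q$, then $B + C = RX$ is the range of a band projection and hence a projection band.

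I do not expect a genuine obstacle here; everything rests on the commutativity of $P$ and $Q$ already established in Proposition~\ref{prop:interaction-of-band-projections}(b) and on the characterisation of band projections via the positivity of $R$ and $\id - R$. The only point that requires a moment of care is remembering to justify that $(\id - P)Q$ is positive: since $(\id - P)$ and $Q$ commute and both are positive, their composition is positive, whereas without commutativity one could not conclude this from positivity of the factors alone.
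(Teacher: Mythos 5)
Your proof is correct and follows essentially the same route as the paper's: both verify that $R = P+Q-PQ$ is a projection using commutativity, establish that $R$ and $\id - R$ are positive via the decomposition $R = P + (\id-P)Q$ (equivalently $\id - R = (\id-P)(\id-Q)$), invoke the characterisation of band projections through positivity of the projection and its complement, and then identify the range by showing $R$ fixes $PX+QX$ while mapping into it. One small correction to your closing remark: the composition of two positive operators is \emph{always} positive, with or without commutativity (if $S,T \ge 0$ and $x \in X_+$ then $Tx \in X_+$ and hence $STx \in X_+$), so commutativity is needed only to see that $R$ is idempotent, not for the positivity of $(\id-P)Q$.
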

\begin{proof}
	Since $P$ and $Q$ commute, a direct computation shows that $P+Q-PQ$ is a projection. Moreover,
	\begin{align*}
		P+Q-PQ = P + Q(\id - P),
	\end{align*}
	and the latter mapping is clearly positive and dominated by $P + \id(\id-P) = \id$. Thus, $P+Q-PQ$ is a band projection.
	
	Obviously, the range of $P+Q-PQ$ is contained in the vector space sum $PX + QX$. The converse inclusion follows from the formula
	\begin{align*}
		Px+Qy = (P+Q-PQ)(Px + Qy)
	\end{align*}
	which holds for all $x,y \in X$.
\end{proof}

Now we can prove that the set of all band projections on $X$ is a Boolean algebra. Recall (for instance from \cite[Definition~II.1.1]{Schaefer1974}) that a \emph{Boolean algebra} is a non-empty partially ordered set $A$ with the following properties:

\begin{enumerate}[(a)]
	\item For all $x,y \in A$ the infimum $x \land y$ and the supremum $x \lor y$ exist (i.e., $A$ is a \emph{lattice}).
	
	\item The lattice operations $\land$ and $\lor$ are \emph{distributive}, i.e., we have
	\begin{align*}
		(x \lor y) \land z = (x \land z) \lor (y \land z)
	\end{align*}
	for all $x,y,z \in A$ (this is equivalent to assuming that $(x \land y) \lor z = (x \lor z) \land (y \lor z)$ for all $x,y,z \in A$, see \cite[Theorem~9 on p.\,11]{Birkhoff1967}).
	
	\item There exists a smallest element $0$ and a largest element $1$ in $A$.
	
	\item $A$ is \emph{complemented}, i.e., for each $x \in A$ there exists a so-called \emph{complement} $x^c \in A$ such that
	\begin{align*}
		x \land x^c = 0 \quad \text{and} \quad x \lor x^c = 1.
	\end{align*}
\end{enumerate}

We note that, in a Boolean algebra $A$, the complement of each element is uniquely determined; this follows from \cite[Theorem~10 on p.\,12]{Birkhoff1967}.

\begin{theorem}	\label{thm:band-projections-form-boolean-algebra}
	Let $\BandProj(X)$ denote the set of all band projections on $X$, ordered by the usual order of positive operators on $X$. Then $\BandProj(X)$ is a Boolean algebra with smallest element $0$ and largest element $\id$. The lattice operations $\land$ and $\lor$ on this Boolean algebra are given by
	\begin{align*}
		P \land Q = PQ \quad \text{and} \quad P \lor Q = P+Q - PQ
	\end{align*}
	for all band projections $P$ and $Q$, and the complement is given by
	\begin{align*}
		P^c = \id - P
	\end{align*}
	for each band projection $P$.
\end{theorem}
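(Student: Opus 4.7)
The strategy is to lean heavily on the preparatory propositions already established: every band projection is positive and dominated by $\id$ (Proposition~\ref{prop:characterisation-of-band-projections}(v)), any two band projections commute (Proposition~\ref{prop:interaction-of-band-projections}(b)), their product is again a band projection with range $PX \cap QX$ (Proposition~\ref{prop:interaction-of-band-projections}(c),(d)), $P+Q-PQ$ is a band projection with range $PX + QX$ (Proposition~\ref{prop:sum-of-projection-bands}), $\id - P$ is a band projection (Proposition~\ref{prop:characterisation-of-band-projections}(vi)), and one band projection dominates another in the operator order exactly when its range is larger (Proposition~\ref{prop:domination-of-band-projections}).

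First I would dispose of the extremal elements: the zero operator and $\id$ are clearly band projections, and for any band projection $P$ the positivity of $P$ and of $\id-P$ gives $0 \le P \le \id$, so $0$ is the minimum and $\id$ is the maximum of $\BandProj(X)$.

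Next I would identify the infimum and supremum explicitly. For the infimum, Proposition~\ref{prop:interaction-of-band-projections} tells us $PQ$ is a band projection whose range is $PX \cap QX$; hence $PQ \le P$ and $PQ \le Q$ by Proposition~\ref{prop:domination-of-band-projections}, and any band projection $R$ below both $P$ and $Q$ has $RX \subseteq PX \cap QX = PQX$, giving $R \le PQ$. Therefore $P \land Q = PQ$. For the supremum, Proposition~\ref{prop:sum-of-projection-bands} provides that $P+Q-PQ$ is a band projection with range $PX+QX$, which contains both $PX$ and $QX$; thus $P, Q \le P+Q-PQ$, and any common upper bound $R$ satisfies $PX+QX \subseteq RX$, forcing $P+Q-PQ \le R$. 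Hence $P \lor Q = P+Q-PQ$.

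For the complement, set $P^c := \id - P$; it is a band projection by Proposition~\ref{prop:characterisation-of-band-projections}(vi), and the identities $P(\id-P) = 0$ and $P + (\id - P) - P(\id-P) = \id$ give $P \land P^c = 0$ and $P \lor P^c = \id$. The only remaining axiom is distributivity, which I expect to be the sole step requiring a small computation but no new idea: since all three band projections $P, Q, R$ pairwise commute and are idempotent, one computes
\begin{align*}
(P \lor Q) \land R &= (P + Q - PQ)R = PR + QR - PQR, \\
(P \land R) \lor (Q \land R) &= PR + QR - (PR)(QR) = PR + QR - PQR^2 = PR + QR - PQR,
\end{align*}
so the two sides agree. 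The main (mild) obstacle is purely organisational: keeping track of which previous result is invoked at each step; nothing genuinely new is needed beyond what has already been proved.
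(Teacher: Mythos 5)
Your proposal is correct and follows essentially the same route as the paper: identify $PQ$ and $P+Q-PQ$ as infimum and supremum via the range-inclusion characterisation of the order (Proposition~\ref{prop:domination-of-band-projections}) together with Propositions~\ref{prop:interaction-of-band-projections} and~\ref{prop:sum-of-projection-bands}, then verify the complement and distributivity by direct computation. The only difference is cosmetic: you write out the distributivity calculation explicitly, whereas the paper merely calls it a straightforward computation.
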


In seperable Hilbert spaces ordered by self-dual cones, a related result for the set of all self-adjoint band projections was shown in \cite[Theorem~II.1]{Penney1976} (although the notion \emph{band projection} was not used explicitly there).

In the proof we make use of the facts established in the propositions above; in particular we will frequently -- and often tacitly -- use that $P \le Q$ for two band projections $P$ and $Q$ if and only if $PX \subseteq QX$.

\begin{proof}[Proof of Theorem~\ref{thm:band-projections-form-boolean-algebra}]
	We first show that $\BandProj(X)$ is a lattice with respect to its given order, and that the lattice operators are given by the formulae in the theorem. Let $P,Q \in \BandProj(X)$. 
	
	It follows from Proposition~\ref{prop:interaction-of-band-projections}(d) that $PQ$ is a lower bound of $P$ and $Q$. If $R \in \BandProj(X)$ is another lower bound of $P$ and $Q$, then $RX \subseteq PX$ and $RX \subseteq QX$, so $RX \subseteq PQX$, again by Proposition~\ref{prop:interaction-of-band-projections}(d); hence, $R \le PQ$. This proves that $P$ and $Q$ have infimum $PQ$ in $\BandProj(X)$.
	
	On the other hand, $P+Q-PQ$ is an upper bound of $P$ and $Q$ according to Proposition~\ref{prop:sum-of-projection-bands}.  If $R \in \BandProj(X)$ is another upper bound of $PX$ and $QX$, then $RX \supseteq PX \cup QX$, hence $RX \supseteq PX + QX$ and thus, it follows again from Proposition~\ref{prop:sum-of-projection-bands} that $R \ge P+Q-PQ$. This proves that $P$ and $Q$ have supremum $P+Q-PQ$ in $\BandProj(X)$.
	
	In particular, $\BandProj(X)$ is a lattice. The fact that it is even a distributive lattice, i.e., that the distributive law
	\begin{align*}
		(P \lor Q) \land R = (P \land R) \lor (Q \land R)
	\end{align*}
	is satisfied for all band projections $P,Q,R$, can now be checked by a straightforward computation that uses the formulae for $\land$ and $\lor$ established above.
	
	Clearly, $\BandProj(X)$ has the smallest element $0$ and the largest element $\id$, and for every band projection $P$, the projection $Q := \id - P$ satisfies $P \land Q = PQ = 0$ and $P \lor Q = P + Q - PQ = P+Q = \id$; hence, $I-P$ is the complement of any $P \in \BandProj(X)$ and $\BandProj(X)$ is indeed a Boolean algebra.
\end{proof}

\begin{corollary}
	Let $\ProjBands(X)$ denote the set of all projection bands in $X$, ordered via set inclusion. The mapping
	\begin{align*}
		\varphi: \BandProj(X) & \to \ProjBands(X), \\
		P & \mapsto PX
	\end{align*}
	is an order isomorphism between the partially ordered sets $\BandProj(X)$ and $\ProjBands(X)$. In particular, $\ProjBands(X)$ is a Boolean algebra with infimum and supremum given by
	\begin{align*}
		B \land C = B \cap C \quad \text{and} \quad B \lor C = B+C
	\end{align*}
	for all projections bands $B,C$ in $X$, and with the complement operation given by
	\begin{align*}
		B^c = B^\perp
	\end{align*}
	for each projection band $B$ in $X$.
\end{corollary}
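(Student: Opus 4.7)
The plan is to define $\varphi$, show it is a bijection that preserves the order, and then transport the Boolean algebra structure from $\BandProj(X)$ across $\varphi$ using Theorem~\ref{thm:band-projections-form-boolean-algebra} together with the explicit formulas established in Propositions~\ref{prop:interaction-of-band-projections} and~\ref{prop:sum-of-projection-bands}.

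First I would verify that $\varphi$ is well-defined (the range of a band projection is by definition a projection band) and surjective (given any projection band $B$, the decomposition $X = B \oplus B^\perp$ furnishes a projection onto $B$ along $B^\perp$, which is a band projection whose range equals $B$). Injectivity follows because a band projection is determined by its range: if $P,Q \in \BandProj(X)$ satisfy $PX = QX$, then also $\ker P = (PX)^\perp = (QX)^\perp = \ker Q$, and two projections agreeing on both range and kernel agree on $X = PX \oplus \ker P$.

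Next I would use Proposition~\ref{prop:domination-of-band-projections} to obtain that $P \le Q$ if and only if $PX \subseteq QX$; this is exactly the statement that $\varphi$ is an order isomorphism of partially ordered sets. Since a Boolean algebra structure is completely determined by the underlying order, $\ProjBands(X)$ automatically inherits a Boolean algebra structure from $\BandProj(X)$ via $\varphi$, with $\emptyset$-element $\{0\}$ and top element $X$.

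Finally, I would read off the explicit formulas. For infima and suprema, apply $\varphi$ to the formulas of Theorem~\ref{thm:band-projections-form-boolean-algebra}: with $B = PX$ and $C = QX$, Proposition~\ref{prop:interaction-of-band-projections}(d) gives
\begin{align*}
    B \land C = \varphi(P \land Q) = \varphi(PQ) = PQX = PX \cap QX = B \cap C,
\end{align*}
while Proposition~\ref{prop:sum-of-projection-bands} yields
\begin{align*}
    B \lor C = \varphi(P \lor Q) = \varphi(P+Q-PQ) = PX + QX = B + C.
\end{align*}
For the complement, $\varphi(\id - P) = (\id - P)X = \ker P$, and by characterisation~(ii) in Proposition~\ref{prop:characterisation-of-band-projections} we have $\ker P = (PX)^\perp = B^\perp$, so $B^c = B^\perp$. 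There is no real obstacle here; the work was done in the preceding propositions, and the corollary amounts to translating them along the bijection $\varphi$.
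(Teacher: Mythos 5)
Your proposal is correct and follows essentially the same route as the paper's proof: surjectivity by the definitions, injectivity because a band projection is determined by its range, monotonicity of $\varphi$ and $\varphi^{-1}$ via Proposition~\ref{prop:domination-of-band-projections}, and the explicit formulas read off from Propositions~\ref{prop:interaction-of-band-projections} and~\ref{prop:sum-of-projection-bands} together with $(\id - P)X = \ker P = (PX)^\perp$. You merely spell out the injectivity and surjectivity arguments in slightly more detail than the paper does.
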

\begin{proof}
	The mapping $\varphi$ is surjective by definition of the notions ``projection band'' and ``band projection'', and it is injective since every band projection $P$ is uniquely determined by its range $PX$. It follows from Proposition~\ref{prop:domination-of-band-projections} that $\varphi$ and its inverse map $\varphi^{-1}$ are monotone. Thus, $\ProjBands(X)$ is indeed a Boolean algebra and $\varphi$ is an isomorphism between the boolean algebras $\BandProj(X)$ and $\ProjBands(X)$.
	
	The formulae for the lattice operations on $\ProjBands(X)$ now follow from Propositions~\ref{prop:interaction-of-band-projections} and~\ref{prop:sum-of-projection-bands}, and the formula for the complement follows from the fact that $(\id - P)X = \ker P = (PX)^\perp$ for each band projection $P$.
\end{proof}

\subsection{The intersection of arbitrarily many projection bands} \label{subsection:the-intersection-of-arbitrarily-many-projection-bands}

According to Proposition~\ref{prop:interaction-of-band-projections}, the intersection of finitely many projection bands is again a projection band. In general, this is no longer true for infinitely many projections bands (not even in the case of Banach lattices) as the following simple example shows:

\begin{example}
	Consider the compact space $K = [-1,0] \cup \{\frac{1}{n}: \; n \in \bbN\}$ and the Banach lattice $C(K)$ of continuous real-valued functions on $K$.
	
	For each $n \in \bbN$ the set $B_n := \{f \in C(K): \; f(x) = 0 \text{ for all } x \ge \frac{1}{n}\}$ is a projection band in $C(K)$. However, the intersection
	\begin{align*}
		\bigcap_{n \in \bbN} B_n 
		& = \{f \in C(K): \; f(x) = 0 \text{ for all } x > 0\} \\
		& = \{f \in C(K): \; f(x) = 0 \text{ for all } x \ge 0\}
	\end{align*}
	is not a projection band in $C(K)$.
\end{example}

However, in a Dedekind complete vector lattice every band is a projection band and hence, the intersection of arbitrarily many projection bands is still a projection band.

Motivated by this we show in this subsection that the intersection of arbitrarily many projection bands in a Dedekind complete pre-Riesz space is again a projection band. Here, we call the pre-Riesz space $X$ \emph{Dedekind complete} if the supremum $\sup A$ exists in $X$ for every non-empty upwards directed set $A \subseteq X$ that is bounded above.

Assume for a moment that $X$ is Dekekind complete. If $(x_j)$ and $(y_j)$ are decreasing nets in $X$ (with the same index set) that are bounded below, then the net $(x_j)$ has an infimum $x$ (we write $x_j \downarrow x$ for this), the net $(y_i)$ has an infimum $y$, and it is not difficult to show that the sum $(x_j + y_j)$ has infimum $x+y$; similarly, for $\lambda \in [0,\infty)$ the net $(\lambda x_j)$ has infimum $\lambda x$.

\begin{theorem} \label{thm:decreasing-net-of-band-projections}
	Assume that $X$ is Dedekind complete and let $(P_j)$ be a net of band projections on $X$ such that $P_j \le P_i$ (equivalently: $P_jX \subseteq P_iX$) whenever $j \ge i$. Then there exists a band projection $P_0$ on $X$ with the following two properties:
	\begin{enumerate}[\upshape (a)]
		\item We have $P_j x \downarrow P_0x$ for each $x \in X_+$.
		\item $P_0X = \bigcap_{j} P_jX$. 
	\end{enumerate}
\end{theorem}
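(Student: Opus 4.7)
The plan is to construct $P_0$ directly as the pointwise infimum on the positive cone. Fix $x \in X_+$. The net $(P_j x)_j$ is decreasing (if $j \ge i$, then $P_j \le P_i$ yields $P_j x \le P_i x$) and bounded below by $0$, so Dedekind completeness of $X$ (applied to the upward-directed, bounded-above set $\{-P_j x : j\}$) produces an infimum $P_0 x \in X_+$. The compatibility of infima with sums and positive scalar multiples noted immediately before the theorem makes $P_0$ additive and positively homogeneous on $X_+$; since the cone $X_+$ is generating in $X$, this extends uniquely to a positive linear operator $P_0$ on $X$ satisfying $P_j x \downarrow P_0 x$ for every $x \in X_+$. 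This is already assertion~(a). Moreover $0 \le P_0 x \le P_j x \le x$ for every $x \in X_+$, so $I - P_0$ is positive as well.

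The central step is to verify that $P_0$ is a band projection. The key observation is that for each $x \in X_+$ and each index $j$ we have $0 \le P_0 x \le P_j x \in P_j X$, so solidity of the band $P_j X$ forces $P_0 x \in P_j X$, i.e., $P_j P_0 x = P_0 x$. Consequently $P_0(P_0 x) = \inf_j P_j(P_0 x) = P_0 x$ on $X_+$, and idempotency of $P_0$ on all of $X$ follows by linearity. Combined with the positivity of both $P_0$ and $I - P_0$, the equivalence (i)$\Leftrightarrow$(v) in Proposition~\ref{prop:characterisation-of-band-projections} shows that $P_0 \in \BandProj(X)$.

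It remains to establish~(b). The solidity argument above shows simultaneously that $P_0 u \in \bigcap_j P_j X$ for every $u \in X_+$, and linearity together with $X = X_+ - X_+$ yields $P_0 X \subseteq \bigcap_j P_j X$. For the converse inclusion, pick $y \in \bigcap_j P_j X$ and write $y = u - v$ with $u, v \in X_+$. Then $P_j u \downarrow P_0 u$ and $P_j v \downarrow P_0 v$, while $P_j u - P_j v = P_j y = y$ is constant in $j$. Rewriting this as $P_j u = P_j v + y$ and taking infima, the sum-compatibility of infima gives $P_0 u = P_0 v + y$, whence $y = P_0 u - P_0 v = P_0 y \in P_0 X$. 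The main obstacle is precisely this last direction: individual decompositions $y = u - v$ need not be respected by the projections $P_j$, so one cannot reduce termwise to the positive case; instead the constancy of $P_j u - P_j v$ together with the infimum-of-sum identity is what converts the global hypothesis $y \in \bigcap_j P_j X$ into the equation $y = P_0 y$.
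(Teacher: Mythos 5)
Your proposal is correct and follows essentially the same route as the paper: you define $P_0$ on $X_+$ as the pointwise infimum, extend linearly using that $X_+$ is generating, prove idempotency via solidity of the bands $P_jX$, and establish the two inclusions in (b) by decomposing into positive parts. The only minor divergence is in the inclusion $\bigcap_j P_jX \subseteq P_0X$, where you obtain $y = P_0y$ directly from translation-invariance of infima applied to $P_ju = y + P_jv$, whereas the paper passes to the inequalities $y \le P_0u$ and $-y \le P_0v$ and then uses the order-ideal property of the band $P_0X$; both arguments are valid.
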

\begin{proof}
	First we define a mapping $P_0: X_+ \to X_+$ by means of $P_0x = \inf_j P_j x$ for each $x \in X_+$. By the remarks we made before the theorem, $P_0$ is linear in the sense that $P_0(\alpha x + \beta y) = \alpha P_0 x + \beta P_0 y$ for all $x,y \in X_+$ and all $\alpha,\beta \in [0,\infty)$. As $X_+$ is generating in $X$, we can extend $P_0$ to a (uniquely determined) linear map -- that we again denote by $P_0$ -- from $X$ to $X$. For each $x \in X_+$ we have $0 \le P_0 x \le x$. 
	
	Let us show next that $P_0$ is a projection; to this end, it suffices to consider $x \in X_+$ and show that $P_0^2x = P_0x$. For each index $j$ we have $0 \le P_0x \le P_j x$, so $P_0x \in P_jX$ and hence, $P_j(P_0x) = P_0x$. This shows that $P_0(P_0x) = P_0x$, so $P_0^2 = P_0$. Consequently, $P_0$ is a band projection that has property~(a). Let us now show~(b).
	
	``$\subseteq$'' Let $x \in P_0X$. Then we can write $x$ as $x = y-z$, where $y$ and $z$ are positive vectors in $P_0X$. For each index $j$, we then have $0 \le y = P_0y \le P_j y$; hence, $y \in P_jX$, and likewise for $z$. Thus, $x = y-z \in P_jX$ for each $j$.
	
	``$\supseteq$'' Let $x \in \bigcap_j P_j X$. We decompose $x$ as $x = y-z$ for $y,z \in X_+$. For each $j$ we have $x = P_j x = P_jy - P_j z$, so 
	\begin{align*}
		x \le P_jy \qquad \text{and} \qquad -x \le P_j z.
	\end{align*}
	Consequently, $x \le P_0y$ and $-x \le P_0 z$, so $x \in [-P_0z, P_0y]$, which proves that $x \in P_0X$.
\end{proof}

In order to derive from Theorem~\ref{thm:decreasing-net-of-band-projections} that the intersection of an arbitrary -- maybe non-directed -- collection of projection bands is still a projection band, we need the following lemma (which is true on every pre-Riesz space, be it Dedekind complete or not).

\begin{lemma} \label{lem:product-of-band-projections-as-infimum}
	Let $P_1,\dots,P_n$ be band projections on $X$ and let $x \in X_+$.
	\begin{enumerate}[\upshape (a)]
		\item If $z \in X$ and $z \le P_1x$, \dots, $z \le P_nx$, then also $z \le P_1 \cdots P_n x$.
		\item We have $P_1\cdots P_n x = \inf\{P_1x,\dots,P_n x\}$.
	\end{enumerate}
\end{lemma}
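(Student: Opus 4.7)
The plan is to prove (a) first, after which (b) will follow readily. Write $Q := P_1 \cdots P_n$; by repeated application of Proposition~\ref{prop:interaction-of-band-projections}(c) this $Q$ is again a band projection, and its definition is independent of the order of the factors.

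For (a), the decisive step will be to show that, for each individual index $i$, the hypothesis $z \le P_i x$ already forces $z \le P_i z$. To this end I would use the simple algebraic identity
\begin{align*}
	P_i x - z \;=\; P_i(x - z) - (\id - P_i) z.
\end{align*}
The first summand on the right lies in $P_iX$, while the second lies in $(\id - P_i)X = (P_iX)^\perp$; hence $P_i(x - z)$ and $-(\id - P_i)z$ sit in orthogonal bands and are therefore disjoint. Their sum equals $P_ix - z$, which is positive by hypothesis, so \cite[Proposition~2.4(a)]{Glueck2019} -- exactly the tool already used in the proof of Proposition~\ref{prop:characterisation-of-band-projections} -- forces both summands to be positive. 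In particular $-(\id - P_i)z \ge 0$, i.e.\ $z \le P_i z$, as claimed.

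Once $z \le P_i z$ is in hand for every $i$, a short induction on $k$ delivers $z \le P_1 \cdots P_k z$: given $z \le P_1 \cdots P_{k-1} z$, apply the positive operator $P_1 \cdots P_{k-1}$ to the inequality $z \le P_k z$ and chain. For $k = n$ this yields $z \le Qz$. Next, apply $Q$ to the original inequality $z \le P_1 x$: using commutativity of the $P_i$ (Proposition~\ref{prop:interaction-of-band-projections}(b)) together with $P_1^2 = P_1$ one has $QP_1 = Q$, so $Qz \le Qx$. Chaining the two gives $z \le Qz \le Qx$, which is (a).

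Part (b) then follows at once. By Proposition~\ref{prop:interaction-of-band-projections}(d) (applied inductively) $QX = P_1X \cap \cdots \cap P_nX \subseteq P_iX$, so $Q \le P_i$ as positive operators by Proposition~\ref{prop:domination-of-band-projections}; since $x \ge 0$ this gives $Qx \le P_ix$. Hence $Qx$ is a lower bound of $\{P_1x, \dots, P_nx\}$, and (a) says it is the greatest such, so it is the infimum. The only genuine insight in the whole argument is the disjoint decomposition in the first display; everything else is bookkeeping with commutativity, positivity, and the basic definition of a band projection.
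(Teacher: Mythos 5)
Your proof is correct and follows essentially the same route as the paper: the heart of both arguments is the observation that $z \le P_i x$ forces $(\id - P_i)z \le 0$ (equivalently $z \le P_i z$), followed by an inductive chaining to get $z \le P_1 \cdots P_n x$, with (b) then immediate. The only divergence is that you derive that key observation via a disjoint decomposition and \cite[Proposition~2.4(a)]{Glueck2019}, which is valid but more elaborate than needed -- simply applying the positive operator $\id - P_i$ to the hypothesis $z \le P_i x$ gives $(\id - P_i)z \le (\id - P_i)P_i x = 0$ in one line, which is what the paper's proof tacitly uses.
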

\begin{proof}
	(a) We first note that, if $z \le Px$ for a band projection $P$, then $(\id - P)z \le 0$. Now we prove the assertion by induction over $n$. For $n=1$ the assertion is obvious, so assume that it has already been proved for some $n \in \bbN$. If $P_{n+1}$ is another band projection such that $z \le P_{n+1}x$, then
	\begin{align*}
		z = (\id - P_{n+1})z + P_{n+1}z \le P_{n+1}z \le P_{n+1}P_1 \cdots P_n x = P_1 \cdots P_{n+1}x.
	\end{align*}
	
	(b) Clearly, $P_1\cdots P_nx$ is a lower bound of $\{P_1x,\dots,P_n x\}$, and according to~(a) it is also the greatest lower bound of this set.
\end{proof}

In the proof of the following corollary we only need assertion~(a) of the lemma. We included assertion~(b) in the lemma anyway since we think it is interesting in its own right.

\begin{corollary} \label{cor:intersection-of-infinitely-many-projection-bands}
	Assume that $X$ is Dedekind complete. Then the intersection of arbitrarily many projection bands in $X$ is again a projection band. More precisely, if $\calP$ is a set of band projections on $X$, then there exists a (unique) band projection $P_0$ on $X$ with range $\bigcap_{P \in \calP} PX$; if $\calP$ is non-empty, then we have
	\begin{align*}
		P_0x = \inf\{Px: \; P \in \calP\}
	\end{align*}
	for each $x \in X_+$
\end{corollary}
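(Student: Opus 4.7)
The plan is to reduce the arbitrary collection $\calP$ to a downward directed net of band projections, so that Theorem~\ref{thm:decreasing-net-of-band-projections} becomes applicable. Let $\mathcal{F}$ denote the directed set of all non-empty finite subsets of $\calP$, ordered by inclusion, and for each $F = \{P_1, \dots, P_n\} \in \mathcal{F}$ set $Q_F := P_1 \cdots P_n$. By Proposition~\ref{prop:interaction-of-band-projections}(b) and~(c) (iterated), $Q_F$ is a well-defined band projection that does not depend on the order of the factors, and by Proposition~\ref{prop:interaction-of-band-projections}(d) (applied inductively) its range is $\bigcap_{P \in F} PX$. If $F \subseteq F'$, then $Q_{F'}X \subseteq Q_F X$, so by Proposition~\ref{prop:domination-of-band-projections} we obtain $Q_{F'} \le Q_F$; in other words, $(Q_F)_{F \in \mathcal{F}}$ is a decreasing net of band projections (with respect to the direction of $\mathcal{F}$). (For the empty case $\calP = \emptyset$ one may simply take $P_0 := \id$.)

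Now Theorem~\ref{thm:decreasing-net-of-band-projections} supplies a band projection $P_0$ on $X$ such that $Q_F x \downarrow P_0 x$ for each $x \in X_+$ and
\begin{align*}
	P_0 X = \bigcap_{F \in \mathcal{F}} Q_F X = \bigcap_{F \in \mathcal{F}} \bigcap_{P \in F} PX = \bigcap_{P \in \calP} PX,
\end{align*}
which is the range identity claimed. Uniqueness of $P_0$ is immediate, since a band projection is determined by its range (as noted in the proof of the order isomorphism $\varphi$ in the previous subsection).

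It remains to verify the infimum formula for non-empty $\calP$ and $x \in X_+$. For each $P \in \calP$ we have $P_0 \le Q_{\{P\}} = P$, so $P_0 x \le Px$, and hence $P_0 x$ is a lower bound for $\{Px : P \in \calP\}$. Conversely, suppose $z \in X$ satisfies $z \le Px$ for every $P \in \calP$. Then for any $F = \{P_1, \dots, P_n\} \in \mathcal{F}$, Lemma~\ref{lem:product-of-band-projections-as-infimum}(a) yields $z \le P_1 \cdots P_n x = Q_F x$. Taking the infimum over $F$ and using part~(a) of Theorem~\ref{thm:decreasing-net-of-band-projections} gives $z \le P_0 x$, so $P_0 x = \inf\{Px : P \in \calP\}$. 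The only subtle point in the argument is the translation from an undirected collection $\calP$ to a directed net; this is handled precisely by forming products, which is the natural ``meet'' operation in the Boolean algebra $\BandProj(X)$ established in Theorem~\ref{thm:band-projections-form-boolean-algebra}.
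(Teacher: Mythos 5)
Your proof is correct and follows essentially the same route as the paper: both pass from the undirected collection $\calP$ to the downward directed net of finite products of its elements, apply Theorem~\ref{thm:decreasing-net-of-band-projections} to that net, and then use Lemma~\ref{lem:product-of-band-projections-as-infimum}(a) to transfer the infimum formula back from the enlarged family to $\calP$. The only cosmetic difference is that you index the net by finite subsets of $\calP$ rather than by the set of products itself, and you treat the case $\calP = \emptyset$ explicitly.
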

\begin{proof}
	We may assume that $\calP$ is non-empty. Let $\hat \calP$ denote the set of all finite products of elements from $\calP$. Then it is easy to see that $\bigcap_{P \in \calP}PX = \bigcap_{P \in \hat \calP} PX$. Moreover, $\hat \calP$ is directed by the converse of the usual order $\le$ on linear operators (since $\hat \calP$ is closed with respect to taking finite products). Thus, $(P)_{P \in \hat \calP}$ is a decreasing net of band projections, so Theorem~\ref{thm:decreasing-net-of-band-projections} shows the existence of a band projection $P_0$ on $X$ such that $P_0X = \bigcap_{P \in \hat\calP} PX$.
	
	It remains to prove the formula for $P_0x$, so let $x \in X_+$. By Theorem~\ref{thm:decreasing-net-of-band-projections}(a) we have
	\begin{align*}
		P_0x = \inf\{Px: \; P \in \hat \calP\}.
	\end{align*}
	Clearly, $P_0x$ is a lower bound of $\{Px: \; P \in \calP\}$, so let $z \in X$ be another lower bound of this set. Then $z$ is, according to Lemma~\ref{lem:product-of-band-projections-as-infimum}(a), also a lower bound of $\{Px: \; P \in \hat \calP\}$, and hence $z \le P_0x$.
\end{proof}

If $X$ is Dedekind complete, then it follows from Corollary~\ref{cor:intersection-of-infinitely-many-projection-bands} that, for every set $S \subseteq X$, there exists a smallest projection band that contains $S$. This projection band can, however, be much larger than the band generated by $S$, as the following example shows:

\begin{example}
	Let $X = \mathbb{R}^3$, let $X_+$ be the four ray cone from Example~\ref{ex:four-ray-cone-bands} and let $v_1$ by the vector introduced in that example. The span of $\{v_1\}$ is a band, but according to Example~\ref{ex:non-lattices-without-projection-bands}(a) there are no non-trival projection bands in $X$. 
	
	Hence, the band generated by $\{v_1\}$ equals $\operatorname{span}\{v_1\}$, while the projection band generated by $\{v_1\}$ equals $X$.
\end{example}

\subsection{Another characterisation of band projections} \label{subsection:another-characterisation-of-band-projections}

The following propositions shows that if two projections bands $B$ and $C$ have trivial intersection, than we automatically have $B \perp C$.

\begin{proposition} \label{prop:projection-band-with-trivial-intersection}
	For two band projections $P$ and $Q$ on $X$ the following assertions are equivalent.
	\begin{enumerate}[\upshape (i)]
		\item $PQ = 0$
		\item $PX \cap QX = \{0\}$.
		\item $PX \perp QX$.
	\end{enumerate}
\end{proposition}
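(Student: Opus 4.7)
The plan is to split the three-way equivalence into two independent biconditionals, \mbox{(i) $\Leftrightarrow$ (ii)} and \mbox{(i) $\Leftrightarrow$ (iii)}, since the two halves draw on different prior results. This seems cleaner than running a single cycle.

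For \mbox{(i) $\Leftrightarrow$ (ii)}: the forward direction is a one-line check. Given $PQ = 0$ and $x \in PX \cap QX$, write $x = Px$ and $x = Qx$; then $x = Px = P(Qx) = (PQ)x = 0$. For the converse, Proposition~\ref{prop:interaction-of-band-projections}(d) already identifies the range of the product: $PQX = PX \cap QX$. If the right-hand side is $\{0\}$, then the range of the linear map $PQ$ is trivial, so $PQ = 0$ as an operator.

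For \mbox{(i) $\Leftrightarrow$ (iii)}: here I would invoke the characterisation from Proposition~\ref{prop:characterisation-of-band-projections}, which gives $\ker P = (PX)^\perp$ for any band projection $P$. Then $PQ = 0$ is equivalent to $QX \subseteq \ker P$, and the right-hand side is $(PX)^\perp$. Unwinding the definition of the set-disjointness notation introduced in Section~\ref{section:introduction}, the inclusion $QX \subseteq (PX)^\perp$ is just another way of writing $PX \perp QX$.

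I do not anticipate any real obstacle: each equivalence collapses to one invocation of a previously proved proposition together with a trivial verification. The only design question is presentation order, and I would list \mbox{(i) $\Leftrightarrow$ (ii)} first because it uses the product formula $PQX = PX \cap QX$ that was the culmination of the previous subsection, and then \mbox{(i) $\Leftrightarrow$ (iii)} second because the identification $\ker P = (PX)^\perp$ naturally reframes \emph{trivial intersection} as \emph{disjointness}, which is the point the proposition is meant to convey.
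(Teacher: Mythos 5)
Your proof is correct, and the equivalence (i) $\Leftrightarrow$ (ii) is handled exactly as in the paper, via Proposition~\ref{prop:interaction-of-band-projections}(d). Where you genuinely diverge is in (i) $\Leftrightarrow$ (iii): the paper proves only the implication (i) $\Rightarrow$ (iii), and does so by reducing to positive elements via Proposition~\ref{prop:orthogonal-subspaces-by-positive-parts} and then verifying by hand that $x \in PX_+$ and $y \in QX_+$ have infimum $0$ -- an argument that needs the fact that $P+Q$ is again a band projection (Proposition~\ref{prop:sum-of-projection-bands}) to control an arbitrary lower bound $b$ via the decomposition $b = (\id-(P+Q))b + (P+Q)b$; the cycle is then closed by the trivial implication (iii) $\Rightarrow$ (ii). Your route instead observes that $PQ=0$ is literally the inclusion $QX \subseteq \ker P$, and that $\ker P = (PX)^\perp$ by Proposition~\ref{prop:characterisation-of-band-projections}, so that the whole biconditional (i) $\Leftrightarrow$ (iii) collapses to unwinding the notation $A \perp B$. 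This is shorter, non-circular (the identity $\ker P = (PX)^\perp$ is established there directly from the definition of a band projection), and arguably identifies the structural reason for the equivalence; what the paper's longer computation buys in exchange is an explicit, self-contained verification that the infimum of the two positive elements is $0$, which is the order-theoretic content behind the disjointness and which makes no appeal to the characterisation of $\ker P$.
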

\begin{proof}
	``(i) $\Leftrightarrow$ (ii)'' This equivalence follows from Proposition~\ref{prop:interaction-of-band-projections}(d) (and is thus true for arbitrary commuting projections on every vector space).
	
	``(i) $\Rightarrow$ (iii)'' According to Proposition~\ref{prop:orthogonal-subspaces-by-positive-parts} it suffices to show that $PX_+ \perp QX_+$, so let $x \in PX_+$ and $y \in QX_+$. In order to show that $x \perp y$ it is necessary and sufficient to prove that $x$ and $y$ have infimum $0$ in $X$. Obviously, $0$ is a lower bound of $x$ and $y$, so let $b$ be another lower bound of those vectors. We then have 
	\begin{align*}
		Pb \le Py = PQy = 0 \quad \text{and} \quad Qb \le Qx = QPx = 0,
	\end{align*}
	so $(P+Q)b \le 0$. On the other hand, we know from Proposition~\ref{prop:sum-of-projection-bands} that $P+Q$ is a band projection (since $PQ = 0$), so $\id - (P+Q)$ is positive. Hence,
	\begin{align*}
		(\id - (P+Q))b \le (1 - (P+Q))x = x - Px - Qx = -Qx = 0.
	\end{align*}
	Consequently, $b = (\id - (P+Q))b + (P+Q)b \le 0$. This proves that $x$ and $y$ indeed have infimum $0$.
	
	``(iii) $\Rightarrow$ (ii)'' For each $x \in PX \cap QX$ we have $x \perp x$, so $x = 0$.
\end{proof}

We remark that the implication ``(iii) $\Rightarrow$ (ii)'' in Proposition~\ref{prop:projection-band-with-trivial-intersection} remains true if $PX$ and $QX$ are replaced with arbitrary bands (over even arbitrary vector subspaces) in $X$. However, the converse implication fails for general bands, even if they are assumed to be directed. We illustrated this, again, in the space $\mathbb{R}^3$ endowed with the four ray cone.

\begin{example} \label{ex:non-disjoint-bands-with-trivial-intersection}
	Let $X = \mathbb{R}^3$, let $X_+$ denote the four ray cone from Example~\ref{ex:four-ray-cone-bands}, and let $v_1,\dots, v_4$ denote the four vectors defined in the same example.
	
	Then $B_1 := \operatorname{span}\{v_1\}$ and $B_2 := \operatorname{span} \{v_2\}$ are bands in $X$ that intersect only in $0$. However, we do not have $B_1 \perp B_2$ since $v_1$ is not disjoint to $v_2$. To see this, consider the vector
	\begin{align*}
		w =
		\begin{pmatrix}
			1 \\ 1 \\ 0
		\end{pmatrix}.
	\end{align*}
	Then $v_1-w = v_4 \in X_+$ and $v_2-w = v_3 \in X_+$. Hence, $w$ is a lower bound of both $v_1$ and $v_2$. On the other hand, $w$ is not an element of the negative cone $-X_+$. Thus, $0$ is not the greatest lower bound of $v_1$ and $v_2$, so $v_1 \not\perp v_2$.
\end{example}

As a consequence of Proposition~\ref{prop:projection-band-with-trivial-intersection} we obtain another characterisation of band projections.

\begin{corollary} \label{cor:band-projection-if-range-and-kernel-are-projection-bands}
	For every linear projection $P: X \to X$ the following assertions are equivalent:
	\begin{enumerate}[\upshape (i)]
		\item $P$ is a band projection.
		\item Both $PX$ and $\ker P$ are projection bands.
	\end{enumerate}
\end{corollary}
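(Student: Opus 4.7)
The implication (i) $\Rightarrow$ (ii) is essentially immediate from the definition. If $P$ is a band projection onto a projection band $B$, then by definition $PX = B$ is a projection band, and $\ker P = B^\perp$ is also a projection band, since the disjoint complement of a projection band is again a projection band (as noted in Subsection~\ref{subsection:basics-on-band-projections}, or via Proposition~\ref{prop:characterisation-of-band-projections}, the equivalence of (i) and (vi)).

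For the nontrivial direction (ii) $\Rightarrow$ (i), the plan is to invoke Proposition~\ref{prop:projection-band-with-trivial-intersection} in order to convert the algebraic condition ``$PX$ and $\ker P$ have trivial intersection'' (which holds automatically because $P$ is a linear projection) into the disjointness condition $PX \perp \ker P$, and then use the already-established equivalence (i) $\Leftrightarrow$ (iv) of Proposition~\ref{prop:characterisation-of-band-projections}. More concretely, under assumption (ii) let $P_1$ and $P_2$ denote the band projections whose ranges coincide with $PX$ and $\ker P$, respectively. Since $P$ is a linear projection we have $P_1 X \cap P_2 X = PX \cap \ker P = \{0\}$, so Proposition~\ref{prop:projection-band-with-trivial-intersection} yields $PX \perp \ker P$. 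Now condition~(iv) of Proposition~\ref{prop:characterisation-of-band-projections} is satisfied, so $P$ is a band projection.

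There is no genuine obstacle here; the content of the corollary lies entirely in the earlier results, and the proof amounts to chaining Proposition~\ref{prop:projection-band-with-trivial-intersection} with the characterisation~(iv) from Proposition~\ref{prop:characterisation-of-band-projections}. The only subtle point worth mentioning in the write-up is that the hypothesis ``$\ker P$ is a projection band'' is used only to produce the auxiliary band projection $P_2$ needed to apply Proposition~\ref{prop:projection-band-with-trivial-intersection}; once the disjointness $PX \perp \ker P$ is in hand, the conclusion is automatic and the auxiliary projections $P_1, P_2$ play no further role.
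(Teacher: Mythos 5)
Your proposal is correct and follows essentially the same route as the paper: the forward direction via $\ker P = (PX)^\perp$, and the converse by introducing auxiliary band projections onto $PX$ and $\ker P$, applying Proposition~\ref{prop:projection-band-with-trivial-intersection} to upgrade $PX \cap \ker P = \{0\}$ to $PX \perp \ker P$, and concluding with characterisation~(iv) of Proposition~\ref{prop:characterisation-of-band-projections}. No gaps.
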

\begin{proof}
	``(i) $\Rightarrow$ (ii)'' If $P$ is a band projection, then $PX$ is a projection band by definition, and hence $\ker P = (PX)^\perp$ is also a projection band.
	
	``(ii) $\Rightarrow$ (i)'' If $PX$ and $\ker P$ are projection bands, than there exist band projections $Q_1,Q_2: X \to X$ such that $Q_1X = PX$ and $Q_2X = \ker P$. Since $Q_1X \cap Q_2X = \{0\}$, it follows from Proposition~\ref{prop:projection-band-with-trivial-intersection} that $Q_1X \perp Q_2X$, i.e., $PX \perp \ker P$. According to Proposition~\ref{prop:characterisation-of-band-projections} this implies that $P$ is a band projection.
\end{proof}

We note that the implication ``(ii) $\Rightarrow$ (i)'' in Corollary~\ref{cor:band-projection-if-range-and-kernel-are-projection-bands} does not remain true, in general, if we replace ``projection bands'' in (ii) with ``bands''. More precisely, we have the following situation:

\begin{remarks}
	\begin{enumerate}[(a)]
		\item There exists a (\emph{weakly pervasive}, see Definition~\ref{def:pervasive-and-weakly-pervasive}) pre-Riesz space $X$ and two bands $B$ and $C$ in $X$ such that $X = B \oplus C$, but $C \not= B^\perp$. A concrete example of this situation can be found in \cite[Example~19]{KalauchPreprint4}; it is, however, important to observe that one of the bands is not directed in this example.
		
		\item If $X$ is weakly pervasive and $X = B \oplus C$ for two directed bands -- or, more generally, two directed ideals -- $B$ and $C$, then it is shown in \cite[Theorem~18]{KalauchPreprint4} that $B$ and $C$ are projections bands and $B = C^\perp$.
		
		\item If $X$ is even \emph{pervasive} (see Definition~\ref{def:pervasive-and-weakly-pervasive}), then the implication mentioned in~(b) remains true even if $B$ and $C$ are only ideals in $X$ (which are not a priori assumed to be directed); this is shown in \cite[Theorem~17]{KalauchPreprint4}.
		
		\item Now, let $X$ be a general pre-Riesz space and let $X = B\oplus C$ for two directed bands -- or, more generally, directed ideals -- $B$ and $C$. It seems to be open whether this implies $C = B^\perp$.
	\end{enumerate}
\end{remarks}

\section{Characterisations of vector lattices} \label{section:characterisations-of-vector-lattices}

In this section we give various criteria for a pre-Riesz space to actually be a vector lattice. All these criteria are in some way related to disjointness. We note that, in the important special case where $X$ is finite dimensional and Archimedean, several sufficient criteria for $X$ to be a vector lattice are known. It suffices, for instance, if $X$ has the Riesz decomposition property (see for instance \cite[Corollary~2.48]{Aliprantis2007}) or if $X$ is pervasive \cite[Theorem~39]{KalauchPreprint4}. In Corollary~\ref{cor:finite-dimensional-weakly-pervasive-spaces} below we give a simultaneous generalisation of those two results.

\subsection{Criteria in terms of one-dimensional projection bands} \label{subsection:criteria-in-terms-of-one-dimensional-projection-bands}

In this subsection we prove that a finite-dimensional pre-Riesz space is automatically a vector lattice if there exist sufficiently many projection bands in it. We begin with the following proposition about linear independence.

\begin{proposition} \label{prop:disjoint-vectors-are-linearly-independent}
	Let $m \in \bbN$ and let $x_1,\dots,x_m \in X \setminus \{0\}$ be pairwise disjoint. Then the tuple $(x_1,\dots,x_m)$ is linearly independent.
\end{proposition}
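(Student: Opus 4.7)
The plan is to argue by contradiction: assume $\sum_{k=1}^m \alpha_k x_k = 0$ is a non-trivial linear dependence, pick any index $i$ with $\alpha_i \neq 0$, and derive $x_i \perp x_i$, which forces $x_i = 0$ (a fact already noted in the introduction, right after the definition of disjointness).

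Concretely, I would proceed as follows. Fix $i$ with $\alpha_i \neq 0$ and rewrite the dependence as
\begin{align*}
	-\alpha_i x_i = \sum_{k \neq i} \alpha_k x_k.
\end{align*}
By hypothesis $x_i \perp x_k$ for every $k \neq i$, i.e.\ each $x_k$ (with $k \neq i$) lies in $\{x_i\}^\perp$. Since $X$ is a pre-Riesz space, the disjoint complement $\{x_i\}^\perp$ is a vector subspace of $X$, so it is closed under finite linear combinations. Hence $\sum_{k \neq i} \alpha_k x_k \in \{x_i\}^\perp$, and therefore $-\alpha_i x_i \in \{x_i\}^\perp$. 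Using once more that $\{x_i\}^\perp$ is a subspace, we may divide by the nonzero scalar $-\alpha_i$ to conclude $x_i \in \{x_i\}^\perp$, that is, $x_i \perp x_i$. This gives $x_i = 0$, contradicting our standing assumption $x_i \in X \setminus \{0\}$.

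There is no real obstacle here; the entire argument rides on the single structural fact that disjoint complements in a pre-Riesz space are vector subspaces (cited from \cite{Gaans2006} in the introduction), together with the elementary observation $x \perp x \Leftrightarrow x = 0$. Everything else is linear algebra. I would keep the write-up short — essentially the three lines above — and make sure to invoke the subspace property of $\{x_i\}^\perp$ explicitly, since that is what makes the proof work in a general pre-Riesz setting (as opposed to a vector lattice, where one would normally appeal to the modulus).
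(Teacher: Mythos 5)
Your proof is correct, and it takes a genuinely different (and arguably cleaner) route than the paper. The paper proceeds by induction on $m$: for the base case $m=2$ it observes that $\alpha_1 x_1 + \alpha_2 x_2 = 0$ is both positive and negative, and invokes the external result \cite[Proposition~2.4(a)]{Glueck2019} (disjoint vectors with positive sum are themselves positive) to conclude that both summands vanish; the inductive step then groups $\sum_{k=1}^m \alpha_k x_k$ against $x_{m+1}$ and reduces to the case of two disjoint vectors. Your argument instead uses only the two facts recorded in the introduction -- that $\{x_i\}^\perp$ is a vector subspace in a pre-Riesz space and that $x \perp x$ forces $x = 0$ -- and handles all $m$ at once: the dependence relation places $-\alpha_i x_i$, and hence $x_i$, inside $\{x_i\}^\perp$. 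What your approach buys is a shorter, induction-free proof with no reliance on the positivity lemma from \cite{Glueck2019}; what the paper's approach buys is an argument whose key mechanism (decomposing a positive element over disjoint pieces) recurs elsewhere in the paper, e.g.\ in the proof of Proposition~\ref{prop:characterisation-of-band-projections}. One small point to make explicit in a final write-up: the step from $-\alpha_i x_i \in \{x_i\}^\perp$ to $x_i \in \{x_i\}^\perp$ uses that the subspace $\{x_i\}^\perp$ is closed under scalar multiplication, which you do state; no gap remains.
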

\begin{proof}
	For $m = 1$ the assertions is obvious, and we next show it for $m = 2$. So let $\alpha_1 x_1 + \alpha_2 x_2 = 0$ for real numbers $\alpha_1,\alpha_2$. Since the sum of the disjoint vectors $\alpha_1 x_1$ and $\alpha_2 x_2$ is both positive and negative, it follows from \cite[Proposition~2.4(a)]{Glueck2019} that both vectors $\alpha_1 x_1$ and $\alpha_2 x_2$ are both positive and negative, and thus $0$. This implies that $\alpha_1 = \alpha_2 = 0$ since $x_1,x_2 \not= 0$ by assumption.
	
	Now assume that the assertion has been proved for a fixed integer $m \ge 2$ and let $\alpha_1, \dots, \alpha_{m+1} \in \bbR$ such that $\sum_{k=1}^{m+1} \alpha_k x_k = 0$. Since the vectors $\sum_{k=1}^m \alpha_k x_k$ and $x_{m+1}$ are disjoint and linearly dependent, it follows from the case $m=2$ considered above that $\sum_{k=1}^m \alpha_k x_k = 0$. Using that the assertion has already been proved for the number $m$, we conclude that $\alpha_1 = \dots = \alpha_m = 0$. Finally, we observe that $\alpha_{m+1}x_{m+1} = 0$, so $\alpha_{m+1} = 0$ since $x_{m+1} \not= 0$.
\end{proof}

\begin{theorem} \label{thm:n-distinct-band-projections-with-rank-1}
	Assume that $n = \dim X < \infty$. If there exist (at least) $n$ distinct band projections of rank $1$ on $X$, then $X$ is an Archimedean vector lattice, i.e., $X$ is linearly order isomorphic to $\bbR^n$ with the standard cone.
\end{theorem}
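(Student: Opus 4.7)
The plan is to exploit the Boolean-algebra structure of band projections, use Proposition~\ref{prop:projection-band-with-trivial-intersection} to force pairwise disjointness of the ranges, and then combine Proposition~\ref{prop:disjoint-vectors-are-linearly-independent} with a dimension count to produce a basis that exhibits the standard cone.

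So let $P_1,\dots,P_n$ be $n$ distinct rank-one band projections. First I would observe that for $i \ne j$ the ranges $P_iX$ and $P_jX$ are distinct one-dimensional subspaces and hence intersect trivially; Proposition~\ref{prop:projection-band-with-trivial-intersection} then yields both $P_iP_j = 0$ and $P_iX \perp P_jX$. Next, each projection band $P_iX$ has generating cone (cited earlier from \cite[Proposition~2.5]{Glueck2019}), and since it is one-dimensional its positive part must be a closed half-line; choose $e_i \in (P_iX)_+ \setminus \{0\}$ spanning this half-line. The vectors $e_1,\dots,e_n$ are pairwise disjoint and nonzero, so by Proposition~\ref{prop:disjoint-vectors-are-linearly-independent} they are linearly independent, and by the dimension hypothesis they form a basis of $X$.

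The next step is to identify the sum $Q := P_1 + \dots + P_n$. A short induction using Proposition~\ref{prop:sum-of-projection-bands} and the relations $P_iP_j = 0$ shows that $Q$ is a band projection with range $P_1X + \dots + P_nX = \linSpan\{e_1,\dots,e_n\} = X$; thus $Q = \id$. Consequently every $x \in X$ decomposes uniquely as $x = \sum_{i=1}^n \alpha_i e_i$ with $\alpha_i e_i = P_i x$. For $x \in X_+$ the positivity of each band projection $P_i$ (Proposition~\ref{prop:characterisation-of-band-projections}) gives $P_i x \in (P_iX)_+ = [0,\infty)e_i$, hence $\alpha_i \ge 0$; conversely any nonnegative combination of the $e_i$ lies in $X_+$ since $X_+$ is closed under addition and nonnegative scaling.

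This shows that, in the coordinates induced by the basis $(e_1,\dots,e_n)$, the cone $X_+$ coincides with the standard cone of $\bbR^n$, so $X$ is linearly order isomorphic to $\bbR^n$ with its standard cone and, in particular, is an Archimedean vector lattice. There is no single hard step here; the main point is simply to recognise that rank-one projection bands, being one-dimensional and directed, must be spanned by positive vectors, and that having $n$ of them forces the full product decomposition via disjointness plus linear independence.
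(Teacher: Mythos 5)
Your proposal is correct and follows essentially the same route as the paper: establish that the distinct one-dimensional ranges intersect trivially, invoke Proposition~\ref{prop:projection-band-with-trivial-intersection} to get pairwise disjointness, span each range by a positive vector, use Proposition~\ref{prop:disjoint-vectors-are-linearly-independent} plus the dimension count to get a basis, and conclude from $P_1+\dots+P_n=\id$ that the cone is the standard cone in these coordinates. The only cosmetic difference is that the paper packages the last step as an order isomorphism with the product $P_1X\times\dots\times P_nX$, while you verify the cone identity directly in the basis $(e_1,\dots,e_n)$; these are the same argument.
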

\begin{proof}
	We may assume that $n \not= 0$. Let $P_1,\dots,P_n$ denote $n$ distinct band projections of rank $1$. Then we have $P_k P_jX = \{0\}$ for $j \not= k$. Indeed, if we assumed $\dim (P_k P_jX) = 1$, then $P_kP_j X = P_kX = P_jX$ -- which would imply $P_k = P_j$ since band projections are uniquely determined by their range.
	
	Since each projection band is spanned by its positive elements, each space $P_kX$ is spanned by a vector $x_k > 0$. According to Proposition~\ref{prop:projection-band-with-trivial-intersection} the vectors $x_1,\dots,x_n$ are pairwise disjoint. Hence, they are linearly independent by Proposition~\ref{prop:disjoint-vectors-are-linearly-independent}. Since $\dim X = n$, this implies that the vectors $x_1,\dots,x_n$ span $X$.
	
	It follows from Proposition~\ref{prop:projection-band-with-trivial-intersection} that $P_k P_j = 0$ whenever $j \not= k$, so
	\begin{align*}
		(P_1+\dots+P_n)x_k = x_k \qquad \text{for each } k \in \{1,\dots,n\};
	\end{align*}
	hence, $P_1 + \dots + P_n = \id$, and we conclude that the linear mapping
	\begin{align*}
		J: \; Y:= P_1X \times \dots \times P_nX \ni (z_1,\dots,z_n) \to z_1+\dots+z_n \in X
	\end{align*}
	is a bijection. Each $P_kX$ is an ordered space with respect to the order inherited from $X$, and as such it is isomorphic to $\bbR$ with the cone $[0,\infty)$. If we endow $Y$ with the product order, then $Y$ is isomorphic to $\bbR^n$ with the standard cone, and the mapping $J$ is an order isomorphism between $Y$ and $X$, which proves the assertion.
\end{proof}

There is a certain conceptual similarity between the above proof and the approach taken in \cite[Section~6]{KalauchPreprint4} to prove \cite[Theorem~39]{KalauchPreprint4}: the authors of \cite{KalauchPreprint4} prove that every finite dimensional Archimedean pervasive pre-Riesz space is actually a vector lattice by considering atoms in such a space and by showing that if atoms $a_1,\dots,a_m$ in a pervasive pre-Riesz space are pairwise linearly independent, then the entire system $(a_1,\dots,a_m)$ is linearly independent. Our usage of Proposition~\ref{prop:disjoint-vectors-are-linearly-independent} and of rank-$1$ band projections in the above proof is somewhat reminiscent of this approach (as it is easy to see that the range of a rank-$1$ band projection is always spanned by an atom).

As a simple consequence of Theorem~\ref{thm:n-distinct-band-projections-with-rank-1} we obtain the following numerical bound on the number of rank-$1$ band projections in $X$:

\begin{corollary} \label{cor:n-distinct-one-dimensional-projection-bands}
	Assume that $n = \dim X < \infty$. Then there exists at most $n$ distinct band projections of rank $1$ on $X$.
\end{corollary}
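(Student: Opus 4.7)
The plan is to argue by contradiction and re-use the machinery already assembled for the preceding theorem, rather than invoke Theorem~\ref{thm:n-distinct-band-projections-with-rank-1} as a black box. Suppose there were $n+1$ pairwise distinct band projections $P_1,\dots,P_{n+1}$ on $X$, each of rank~$1$.

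First I would extract a positive spanning vector from each range. Every projection band is directed (as was recalled in Subsection~\ref{subsection:basics-on-band-projections}), so each one-dimensional band $P_kX$ is spanned by some $x_k > 0$. Next I would translate the distinctness of the band projections into distinctness of their ranges: since a band projection is uniquely determined by its range $PX$, distinct $P_k$ give distinct one-dimensional subspaces $P_kX$, and any two distinct one-dimensional subspaces intersect only in $0$. Hence $P_jX \cap P_kX = \{0\}$ for $j \neq k$, and Proposition~\ref{prop:projection-band-with-trivial-intersection} upgrades this to $P_jX \perp P_kX$, so in particular $x_j \perp x_k$.

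Finally, the vectors $x_1,\dots,x_{n+1}$ are pairwise disjoint and non-zero, so Proposition~\ref{prop:disjoint-vectors-are-linearly-independent} forces them to be linearly independent. This contradicts $\dim X = n$, and the corollary follows.

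There is really no main obstacle here: all the genuine work sits in Proposition~\ref{prop:projection-band-with-trivial-intersection} (intersection-trivial implies disjoint) and Proposition~\ref{prop:disjoint-vectors-are-linearly-independent} (pairwise disjoint nonzero vectors are linearly independent). The only point that warrants a brief sentence is the remark that a band projection is determined by its range, which is what makes ``distinct band projections'' the same as ``distinct projection bands''.
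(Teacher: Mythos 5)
Your proof is correct, but it follows a different route from the paper. The paper's proof applies Theorem~\ref{thm:n-distinct-band-projections-with-rank-1} as a black box: if there were more than $n$ rank-$1$ band projections, then in particular there are at least $n$ of them, so $X$ is isomorphic to $\bbR^n$ with the standard cone, and one then observes that this concrete space carries exactly $n$ rank-$1$ band projections -- a contradiction. You instead take $n+1$ distinct rank-$1$ band projections and rerun the linear-independence argument directly: directedness of projection bands yields positive spanning vectors $x_k$, distinctness of the ranges plus Proposition~\ref{prop:projection-band-with-trivial-intersection} yields pairwise disjointness, and Proposition~\ref{prop:disjoint-vectors-are-linearly-independent} yields $n+1$ linearly independent vectors in an $n$-dimensional space. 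This is in effect an unpacking of the first half of the proof of Theorem~\ref{thm:n-distinct-band-projections-with-rank-1}, and it buys you something: you do not need the structural conclusion that $X$ is a vector lattice, nor the (easy but unproved in the paper) count of rank-$1$ band projections on $\bbR^n$ with the standard cone. The paper's version is shorter given that the theorem is already in hand. All the individual steps you cite are justified by the references you give, so there is no gap.
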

\begin{proof}
	If there were strictly more than $n$ distinct band projections of rank $1$, then Theorem~\ref{thm:n-distinct-band-projections-with-rank-1} would imply that $X$ is isomorphic to $\bbR^n$ with the standard cone -- but on this space there exist precisely $n$ distinct rank-$1$ band projections, so we arrive at a contradiction.
\end{proof}

\subsection{Criteria in terms of the number of projection bands} \label{subsection:criteria-in-terms-of-the-number-of-projection-bands}

If the space $X$ is finite-dimensional and has closed cone, then it follows from~\cite[Theorem~4.4.26]{Kalauch2019} that there exist only finitely many bands in $X$. In particular, the Boolean algebra $\ProjBands(X)$ is finite, so we conclude that the number of projection bands in $X$ is a power $2^m$ of $2$. In the following we are going to prove a bit more: we will not assume $X_+$ to be closed a priori, we will show that we always have $m \le \dim X$, and that equality holds if and only if $X$ is an Archimedean vector lattice.

Let us start with the following slightly more sophisticated version of Proposition~\ref{prop:disjoint-vectors-are-linearly-independent}.

\begin{proposition} \label{prop:disjoint-vectors-are-linearly-independent-more-sophisticated}
	Let $A_1, \dots, A_m \subseteq X$ be subsets of $X$ such that $A_i \perp A_j$ whenever $i\not= j$. For each $j \in \{1,\dots,m\}$, let $(x_{j,1},\dots,x_{j,n_j})$ be a linearly independent system of vectors in $A_j$. Then the entire system
	\begin{align*}
		(x_{1,1},\dots,x_{1,n_1}, \dots, x_{m,1}, \dots, x_{m,n_m})
	\end{align*}
	is linearly independent.
\end{proposition}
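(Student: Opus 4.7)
The plan is to reduce to Proposition~\ref{prop:disjoint-vectors-are-linearly-independent} by observing that one can ``upgrade'' the hypothesis $A_i \perp A_j$ to $V_i \perp V_j$, where $V_j := \linSpan A_j$. This upgrade uses precisely the fact (recalled in the Introduction and exploited already in Proposition~\ref{prop:orthogonal-subspaces-by-positive-parts}) that disjoint complements in a pre-Riesz space are vector subspaces. Indeed, from $A_j \perp A_i$ one gets $A_j \subseteq A_i^\perp$, and since $A_i^\perp$ is a subspace, $V_j \subseteq A_i^\perp$, i.e.\ every element of $V_j$ is disjoint from $A_i$. This means $A_i \subseteq V_j^\perp$, and since $V_j^\perp$ is again a subspace, $V_i \subseteq V_j^\perp$, so $V_i \perp V_j$.

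With this in hand, suppose
\begin{align*}
	\sum_{j=1}^m \sum_{k=1}^{n_j} \alpha_{j,k}\, x_{j,k} = 0
\end{align*}
for some real coefficients $\alpha_{j,k}$. Set $y_j := \sum_{k=1}^{n_j} \alpha_{j,k} x_{j,k} \in V_j$. Then $y_1 + \dots + y_m = 0$ and, by the previous paragraph, the vectors $y_1,\dots,y_m$ are pairwise disjoint.

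If some of the $y_j$ were non-zero, then the subsystem consisting of those non-zero $y_j$'s would be pairwise disjoint and non-zero, hence linearly independent by Proposition~\ref{prop:disjoint-vectors-are-linearly-independent}. But this subsystem sums to $0$ with all coefficients equal to $1$, which contradicts its linear independence. Consequently $y_j = 0$ for every $j$. Since each tuple $(x_{j,1},\dots,x_{j,n_j})$ is linearly independent in $V_j$ by assumption, this forces $\alpha_{j,k} = 0$ for all $j$ and $k$, proving the claim.

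I do not expect any serious obstacle: the only conceptual step is the passage from $A_i \perp A_j$ to $V_i \perp V_j$, and that is immediate from the pre-Riesz hypothesis. Everything else is a clean reduction to the already established Proposition~\ref{prop:disjoint-vectors-are-linearly-independent}.
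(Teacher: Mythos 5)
Your proof is correct and follows essentially the same route as the paper's: pass from $A_i \perp A_j$ to $\linSpan(A_i) \perp \linSpan(A_j)$ using that disjoint complements are subspaces, form the partial sums $y_j$, and invoke Proposition~\ref{prop:disjoint-vectors-are-linearly-independent} to force all $y_j = 0$. Your handling of the last step (restricting to the non-zero $y_j$'s and contradicting their linear independence) is a slightly cleaner phrasing of the paper's inductive argument, but it is the same idea.
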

\begin{proof}
	First we note that, for any two distinct indices $i,j \in \{1,\dots,m\}$, we have $\linSpan(A_i) \perp A_j$ and hence $\linSpan(A_i) \perp \linSpan(A_j)$ since $X$ is a pre-Riesz space. Now assume that
	\begin{align*}
		\sum_{j=1}^m \sum_{k=1}^{n_j} \lambda_{j,k} x_{j,k} = 0
	\end{align*}
	for scalars $\lambda_{j,k} \in \bbR$. We define vectors $y_j = \sum_{k=1}^{n_j} \lambda_{j,k} x_{j,k} \in \linSpan(A_j)$ for $j \in \{1,\dots,m\}$. Thus, the vectors $y_1,\dots,y_m$ are pairwise disjoint. Since
	\begin{align*}
		y_1 + \dots + y_m = 0,
	\end{align*}
	it follows from Proposition~\ref{prop:disjoint-vectors-are-linearly-independent} that one of these vectors is $0$, and inductively we then derive that actually all vectors $y_1,\dots,y_m$ are $0$.
	
	Now, fix $j \in \{1,\dots,m\}$. Since $0 = y_j = \sum_{k=1}^{n_j} \lambda_{j,k} x_{j,k}$, we conclude from the linear independence of the system $(x_{j,1},\dots,x_{j,n_j)}$ that $\lambda_{j,1} = \dots = \lambda_{j,n_j} = 0$. This proves the assertion.
\end{proof}

A second ingredient that we need is the following simple observation about band projections.

\begin{lemma} \label{lem:sum-of-band-projections}
	Let $P_1,\dots,P_m: X \to X$ be band projections and assume that $P_i P_j = 0$ whenever $i\not= j$. Then
	\begin{align*}
		P_1 + \dots + P_m
	\end{align*}
	is also a band projection.
\end{lemma}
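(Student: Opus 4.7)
The plan is to proceed by induction on $m$, feeding the inductive hypothesis into Proposition~\ref{prop:sum-of-projection-bands} at each step. The base case $m = 1$ is trivial.

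For the inductive step, suppose $Q := P_1 + \dots + P_m$ has already been shown to be a band projection. The key observation is that the pairwise-vanishing hypothesis transfers to $Q$: namely,
\begin{align*}
	Q P_{m+1} = \sum_{i=1}^{m} P_i P_{m+1} = 0,
\end{align*}
and likewise $P_{m+1} Q = 0$. Since $Q$ and $P_{m+1}$ are both band projections, Proposition~\ref{prop:sum-of-projection-bands} tells us that
\begin{align*}
	Q + P_{m+1} - Q P_{m+1}
\end{align*}
is a band projection. But with $QP_{m+1} = 0$ this simplifies to $Q + P_{m+1} = P_1 + \dots + P_{m+1}$, completing the induction.

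I do not expect any real obstacle: the entire argument is bootstrapped from the two-band case (Proposition~\ref{prop:sum-of-projection-bands}), and the only thing to check in the inductive step is that multiplying the partial sum $Q$ into $P_{m+1}$ annihilates it, which is immediate from distributivity and the hypothesis $P_iP_j = 0$ for $i \neq j$. (Alternatively, one could phrase the result as iterating the Boolean-algebra join $P \lor Q = P+Q-PQ$ from Theorem~\ref{thm:band-projections-form-boolean-algebra}, since the assumption $P_iP_j = 0$ is exactly $P_i \land P_j = 0$ in $\BandProj(X)$; but the induction via Proposition~\ref{prop:sum-of-projection-bands} is the most direct route.)
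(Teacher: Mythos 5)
Your proof is correct, and there is no circularity: Proposition~\ref{prop:sum-of-projection-bands} is established earlier in the paper and its proof does not depend on this lemma, so you may freely iterate it. Your route differs from the paper's, though. The paper does not invoke Proposition~\ref{prop:sum-of-projection-bands} at all; instead it checks directly that $P_1+\dots+P_m$ is a positive projection, proves by induction the factorisation
\begin{align*}
	\id - (P_1+\dots+P_m) = (\id-P_1)\cdots(\id-P_m),
\end{align*}
deduces that the complementary projection is positive as a product of positive operators, and then applies the characterisation of band projections as those projections $P$ with $P$ and $\id-P$ both positive (Proposition~\ref{prop:characterisation-of-band-projections}). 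The two arguments hinge on the same algebraic identity --- your step $Q+P_{m+1}-QP_{m+1}=Q+P_{m+1}$ is exactly the expansion of $(\id-Q)(\id-P_{m+1})$ --- but they package it differently: yours is shorter because it black-boxes the two-projection case, while the paper's is more self-contained and yields the explicit product formula for the complementary band projection as a by-product. One tiny economy you could note: only $QP_{m+1}=0$ is needed to apply Proposition~\ref{prop:sum-of-projection-bands}; the observation that $P_{m+1}Q=0$ as well is harmless but not used.
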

\begin{proof}
	The assumptions clearly imply that $P_1 + \dots + P_m$ is a positive projection. Next, we show by induction over $m$ that
	\begin{align*}
		\id - (P_1 + \dots + P_m) = (\id - P_1) (\id - P_2) \cdots (\id - P_m).
	\end{align*}
	For $m = 1$ this is obvious, so assume that it has been proved for some fixed $m \in \bbN$ and consider now one more band projection $P_{m+1}$ such that $P_jP_{m+1} = 0$ for all $j \in \{1,\dots,m\}$. Then 
	\begin{align*}
		(\id - P_1) (\id - P_2) \cdots (\id - P_m)(\id - P_{m+1}) & = \big(\id - (P_1 + \dots + P_m)\big)(\id - P_{m+1}) \\
		& = \id - (P_1 + \dots + P_{m+1}),
	\end{align*}
	as claimed. We thus conclude that $\id - (P_1 + \dots + P_m)$ is positive, too.
\end{proof}

Now we can prove the first main result of this subsection.

\begin{theorem} \label{thm:only-finitely-many-band-projections}
	If $n = \dim X < \infty$, the following assertions hold:
	\begin{enumerate}[\upshape (a)]
		\item The number of band projections on $X$ is equal to $2^m$ for some $m \in \{0,\dots,n\}$.
		
		\item We have $m = n$ if and only if $X$ is an Archimedean vector lattice.
	\end{enumerate}
\end{theorem}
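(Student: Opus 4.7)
The plan is to use Theorem \ref{thm:band-projections-form-boolean-algebra}, which makes $\BandProj(X)$ a Boolean algebra, together with the finite-dimensionality of $X$, to show this algebra is finite and atomic with at most $n$ atoms; then $|\BandProj(X)| = 2^m$ for $m \le n$, and the equality $m = n$ will force $X$ to split into $n$ rank-$1$ projection bands, whence Theorem \ref{thm:n-distinct-band-projections-with-rank-1} applies.

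First I would establish the following facts in order. Given any nonzero band projection $P$, pick among nonzero band projections $Q \le P$ one with minimum $\dim QX$; since band projections are determined by their range (Proposition \ref{prop:domination-of-band-projections}), such a $Q$ is an atom of $\BandProj(X)$, so atoms exist below every nonzero element. If $P_1, \ldots, P_k$ are distinct atoms, then $P_i P_j = P_i \wedge P_j = 0$ for $i \neq j$ (the standard Boolean-algebra property of distinct atoms), so by Proposition \ref{prop:projection-band-with-trivial-intersection} the ranges $P_i X$ and $P_j X$ are order-disjoint, and Proposition \ref{prop:disjoint-vectors-are-linearly-independent-more-sophisticated} yields $\sum_{i=1}^{k} \dim(P_i X) \le n$; in particular there are at most $n$ atoms. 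List them all as $P_1, \ldots, P_m$ with $m \le n$. By Lemma \ref{lem:sum-of-band-projections} the sum $S := P_1 + \cdots + P_m$ is itself a band projection; I claim $S = \id$. If not, the argument above produces an atom $Q \le \id - S$; then $Q(\id - S) = Q$ forces $QS = 0$ and hence $QP_i = 0$ for all $i$. But $Q$ must equal some $P_j$, whence $P_j = P_j^2 = 0$, a contradiction.

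For (a) it follows that any $P \in \BandProj(X)$ can be written $P = PS = \sum_i PP_i$, with each $PP_i$ a band projection dominated by the atom $P_i$ (Proposition \ref{prop:interaction-of-band-projections}), hence lying in $\{0, P_i\}$. The map $P \mapsto \{i : PP_i = P_i\}$ is therefore a bijection from $\BandProj(X)$ onto $2^{\{1,\ldots,m\}}$ (surjectivity again using Lemma \ref{lem:sum-of-band-projections}), which gives $|\BandProj(X)| = 2^m$ with $m \le n$. For (b), the ``if'' direction is clear from the structure of $\bbR^n$. For the ``only if'' direction, assume $m = n$: since $m \le \sum_{i=1}^{m} \dim(P_i X) \le n = m$ and each summand is at least $1$, every $P_i$ has rank exactly $1$, and the $n$ distinct rank-$1$ band projections $P_1, \ldots, P_n$ allow Theorem \ref{thm:n-distinct-band-projections-with-rank-1} to conclude that $X$ is linearly order isomorphic to $\bbR^n$ with the standard cone. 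The main subtlety in the plan is the equality $S = \id$: this is where the finite-dimensionality really enters (via guaranteeing atoms below every nonzero band projection), combined with the Boolean-algebra calculus and the range-determines-projection principle of Proposition \ref{prop:domination-of-band-projections}; everything else is bookkeeping over propositions already available in the paper.
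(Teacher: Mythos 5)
Your proof is correct and follows essentially the same route as the paper's: your ``atoms of $\BandProj(X)$'' are exactly the paper's minimal projection bands (under the isomorphism $P \mapsto PX$), and the key steps --- existence of atoms by finite-dimensionality, pairwise orthogonality via Proposition~\ref{prop:projection-band-with-trivial-intersection}, the bound on their number via Propositions~\ref{prop:disjoint-vectors-are-linearly-independent} and~\ref{prop:disjoint-vectors-are-linearly-independent-more-sophisticated}, the identity $P_1+\dots+P_m=\id$ via Lemma~\ref{lem:sum-of-band-projections}, the resulting bijection with subsets of $\{1,\dots,m\}$, and the reduction of $m=n$ to Theorem~\ref{thm:n-distinct-band-projections-with-rank-1} --- all coincide. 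Only the bookkeeping differs slightly (e.g.\ your contradiction argument for $S=\id$ via $QP_i=0$ for all $i$), and it is sound.
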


It is hardly surprising that the proof of Theorem~\ref{thm:only-finitely-many-band-projections} below is strongly related to the Boolean algebra structure of the set of all projection bands. However, we cannot rely on this Boolean structure alone since we want to relate the number $m$ to the dimension of $X$ -- i.e., we need to take the linear structure of the underlying space into account.

\begin{proof}[Proof of Theorem~\ref{thm:only-finitely-many-band-projections}]
	We may assume throughout the proof that $n \not= 0$.
	
	(a) \emph{Step 1:} Within this proof, let us call a projection band $B$ minimal if it is non-zero and if it does not contain any non-zero projection band except itself. Since $X$ is finite dimensional, every non-zero projection band contains a minimal projection band. Let $\calM$ denote the set of all minimal projection bands in $X$. If $B,C \in \calM$ are two distinct projection bands, then $B \cap C = \{0\}$; indeed, $B \cap C$ is a projection band that is contained in both $B$ and $C$. Hence, if it were non-zero, we would have $B \cap C = B$ and $B \cap C = C$, so $B = C$.
	
	Consequently, $B \perp C$ for any two distinct $B,C \in \calM$ by Proposition~\ref{prop:projection-band-with-trivial-intersection}. It thus follows from Proposition~\ref{prop:disjoint-vectors-are-linearly-independent} that there exist at most $n$ distinct minimal projection bands in $X$; we enumerate them as $B_1,\dots,B_m$ (where $1 \le m \le n$), and we denote the corresponding band projections by $P_1,\dots,P_m$.
	
	Since $P_iP_j = 0$ whenever $i \not= j$, it follows from Lemma~\ref{lem:sum-of-band-projections} that $P_1 + \dots + P_m$ is a band projection. Actually, this band projection coincides with $\id$, since otherwise the range of the complementary band projection $\id - (P_1 + \dots + P_m)$ would contain one of the minimal projection bands $B_1,\dots,B_m$, which is a contradiction. Hence,
	\begin{align*}
		P_1 + \dots + P_m = \id.
	\end{align*}
	Consequently, $B_1 + \dots + B_m = X$.
	
	\emph{Step 2:} Next we note that, for each projection band $C$ in $X$ and each $k \in \{1,\dots,m\}$ we have either $B_j \subseteq C$ or $B_j \cap C = \{0\}$; this is a consequence of the minimality of $B_j$. Hence, for every band projection $P$ on $X$ and every $j \in \{1,\dots,m\}$ we have either $PP_j = P_j$ or $P P_j = 0$.
	
	Thus, for every band projection $P$ on $X$ we have
	\begin{align*}
		P = \sum_{j\in I_P} P_j,
	\end{align*}
	where $I_P := \big\{j \in \{1,\dots,m\}: \, P_jP \not= 0\big\}$. Conversely, we note that the sum $P_I := \sum_{j\in I} P_j$ is, for any $I \subseteq \{1,\dots,m\}$, a band projection (according to Lemma~\ref{lem:sum-of-band-projections}), and the set $I$ is uniquely determined by this sum (since it is the set of all $k$ such that $P_kP_I \not= 0$). This proves that there exist exactly $2^m$ band projections on $X$, and we have already observed above that $m \le n$. We have thus proved~(a)
	
	(b) Assume now that $m = n$.
	
	For every $j \in \{1,\dots,m\}$ we now choose a basis $(x_{j,1},\dots, x_{j,n_j})$ of the space $B_j$. It follows from Proposition~\ref{prop:disjoint-vectors-are-linearly-independent-more-sophisticated} that the system
	\begin{align*}
		(x_{1,1},\dots,x_{1,n_1}, \dots, x_{m,1}, \dots, x_{m,n_m})
	\end{align*}
	is linearly independent. Hence, $n_1 + \dots + n_m \le n$. As $m = n$, it follows that none of the numbers $n_j$ can be larger than $1$, so each of the $n$ projection bands $B_1,\dots, B_m = B_n$ is one-dimensional. Theorem~\ref{thm:n-distinct-band-projections-with-rank-1} thus shows that $X$ is an Archimedean vector lattice.
	
	Conversely, if $X$ is an Archimedean vector lattice, then it is isomorphic to $\bbR^n$ with the standard cone, so there exist indeed $2^n$ band projections on $X$, so $m=n$.
\end{proof}

Step~1 in the proof of Theorem~\ref{thm:only-finitely-many-band-projections}(a) also provides us with another interesting insight into the structure of finite-dimensional pre-Riesz spaces. The facts that $P_iP_j = 0$ for any two distinct $i,j \in \{1,\dots,m\}$ and that $P_1 + \dots + P_m = \id$ imply that the mapping
\begin{align*}
	X & \to B_1 \times \dots \times B_m, \\
	x & \mapsto (P_1x,\dots,P_mx)
\end{align*}
is an isomorphism of ordered vector spaces, where $B_1 \times \cdots \times B_m$ is endowed with the product order. 
Moreover, it is not difficult to see that every projection band in a pre-Riesz space is itself a pre-Riesz space; hence, each $B_j$ is a pre-Riesz space.

We also observe that none of the pre-Riesz spaces $B_j$ contains a non-trivial projection band. Indeed, if $Q: B_j \to B_j$ is a band projection, then $QP_j: X \to X$ is a band projection with the same range as $Q$; by the minimality if $B_j$ this implies that this range is either $\{0\}$ or $B_j$. We thus have the following structure result, which is the second main result of this subsection.

\begin{theorem} \label{thm:projection-band-structure-of-finite-dimensional-pre-riesz-spaces}
	Assume that $1 \le \dim X < \infty$. Then there exists a number $m \in \{1,\dots,\dim X\}$ such that $X$ is isomorpic (as an ordered vector space) to the product of $m$ non-zero pre-Riesz spaces none of which contains a non-trivial projection band.
\end{theorem}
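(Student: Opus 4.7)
The plan is to package the minimal-projection-band data produced in Step~1 of the proof of Theorem~\ref{thm:only-finitely-many-band-projections} into a product decomposition, and then verify that each factor admits no non-trivial projection band. In essence, the preceding paragraph in the text already dictates the shape of the argument, so the task is to fill in the two non-obvious claims it rests on.

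First, I would invoke Step~1 of the proof of Theorem~\ref{thm:only-finitely-many-band-projections} to obtain minimal projection bands $B_1,\dots,B_m$ with $1 \le m \le \dim X$ and associated band projections $P_1,\dots,P_m$ satisfying $P_iP_j = 0$ for $i \ne j$ and $P_1 + \cdots + P_m = \id$. These two identities alone make the linear map
\[ J: X \to B_1 \times \cdots \times B_m, \qquad x \mapsto (P_1 x, \dots, P_m x), \]
a vector space bijection with inverse $(z_1,\dots,z_m) \mapsto z_1 + \cdots + z_m$. Each $B_j$, being a projection band, is directed by \cite[Proposition~2.5]{Glueck2019}; a standard check shows that a directed linear subspace of a pre-Riesz space is itself pre-Riesz when given the inherited order, so each $B_j$ is a pre-Riesz space. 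Endowing the codomain with the product order, positivity of $J$ follows from positivity of every $P_j$, while positivity of $J^{-1}$ is immediate from the definition of the product cone. Hence $J$ is an order isomorphism.

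The main step is to show that each factor $B_j$ admits no non-trivial projection band. Given a band projection $Q: B_j \to B_j$, I would lift it to $\tilde Q := QP_j: X \to X$ and claim that $\tilde Q$ is a band projection on $X$ with range $Q(B_j)$. Positivity of $\tilde Q$ is clear; the decomposition
\[ \id - QP_j = (\id - P_j) + (\id_{B_j} - Q)P_j \]
exhibits $\id - \tilde Q$ as a sum of positive operators. Idempotency follows from $P_jQ = Q$ on $B_j$ (since $Q(B_j) \subseteq B_j = P_jX$), giving $\tilde Q^2 = Q(P_jQ)P_j = Q^2 P_j = \tilde Q$. Proposition~\ref{prop:characterisation-of-band-projections}\,(v) then certifies $\tilde Q$ as a band projection on $X$ with the same range as $Q$. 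By minimality of $B_j$ among the projection bands of $X$, this range must be $\{0\}$ or $B_j$, so $Q = 0$ or $Q = \id_{B_j}$.

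The only genuinely delicate point is this lifting of an internal band projection on $B_j$ to a band projection on all of $X$; once the positivity identity above and the idempotency computation are in place, minimality of $B_j$ closes the argument, and everything else is either direct bookkeeping or already supplied by Theorem~\ref{thm:only-finitely-many-band-projections} and Proposition~\ref{prop:characterisation-of-band-projections}.
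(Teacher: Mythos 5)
Your proof follows exactly the route the paper takes (the paper's argument is the paragraph preceding the theorem): extract the minimal projection bands $B_1,\dots,B_m$ from Step~1 of the proof of Theorem~\ref{thm:only-finitely-many-band-projections}, observe that $x \mapsto (P_1x,\dots,P_mx)$ is an order isomorphism onto the product, and rule out non-trivial projection bands in each factor by lifting an internal band projection $Q$ on $B_j$ to $QP_j$ on $X$ and invoking minimality. Your positivity and idempotency computations for $QP_j$ are correct and supply details the paper leaves implicit.

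One justification you give is not sound, however: it is \emph{not} true in general that a directed linear subspace of a pre-Riesz space is again pre-Riesz with the inherited order. For instance, $\mathbb{R}^2$ with the cone $\{(s,t):\, s>0\}\cup\{0\}$ is directed but not pre-Riesz, and it arises (up to isomorphism) as the induced order on the directed subspace $\{(s,t,s,-t)\}$ of the vector lattice $\mathbb{R}^2_{\mathrm{lex}}\times\mathbb{R}^2_{\mathrm{lex}}$. Since a finite-dimensional pre-Riesz space need not be Archimedean, you also cannot bypass this by passing Archimedeanness to the subspace. The fact you actually need -- that a \emph{projection band} $B=PX$ is pre-Riesz -- is true and is what the paper asserts; it follows from a direct argument using the band projection: if $A\subseteq B$ is finite and non-empty, $x\in B$, and every upper bound of $x+A$ taken \emph{within $B$} is an upper bound of $A$, then for any upper bound $u\in X$ of $x+A$ one has $Pu\ge x+a$ for all $a\in A$ (positivity of $P$), hence $Pu\ge a$ by hypothesis, and $u = Pu + (\id-P)u \ge Pu \ge a$ since $(\id-P)u \ge (\id-P)(x+a)=0$; thus the pre-Riesz property of $X$ yields $x\in X_+\cap B$. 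With this replacement the rest of your argument stands, and note that the pre-Riesz property of $B_j$ is genuinely needed where you apply Proposition~\ref{prop:characterisation-of-band-projections} inside $B_j$ to get positivity of $Q$ and $\id_{B_j}-Q$.
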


For Hilbert spaces ordered by self-dual cones a related structure result (even in infinite dimensions) can be found in \cite[Corollary~II.12]{Penney1976}.

\subsection{Criteria in the class of weakly pervasive spaces} \label{subsection:criteria-in-the-class-of-weakly-pervasive-spaces}

Assume for a moment that $X$ is finite dimensional with closed cone. In \cite[Theorem~39]{KalauchPreprint4} it was shown that if $X$ is pervasive (see Definition~\ref{def:pervasive-and-weakly-pervasive} below), then $X$ is in fact a vector lattice. The same is true if $X$ is assumed to have the Riesz decomposition property instead if being pervasive (see for instance \cite[Corollary~2.49]{Aliprantis2007}). These observations suggest to study the following two questions:

\begin{enumerate}[(a)]
	\item Since the Riesz decomposition property and the property of beging pervasive are logically independent for general pre-Riesz spaces (see \cite[Example~13]{Kalauch2019a} and \cite[Example~23]{Malinowski2018}), it is natural to seek for a simultaneous generalisations of the two above mentioned results. 
	
	\item The fact that the Riesz decomposition property implies that $X$ is a vector lattice is actually not only true in finite-dimensional spaces (with closed cone), but for instance also for the more general case of reflexive ordered Banach spaces with generating cone \cite[Corollary~2.48]{Aliprantis2007}. This suggests that searching for sufficient criteria for infinite dimensional spaces to be a vector lattice is a worthwhile endeavour. 
\end{enumerate}

In this subsection we pursue both goals outlined above. As before, we assume that $X$ is a general pre-Riesz space.

Two vectors $x,y \in X_+$ are called \emph{D-disjoint} if $[0,x] \cap [0,y] = \{0\}$. For a more detailed discussion of this notion and of its origin, we refer to \cite[Section~4.1.3]{Kalauch2019} and \cite[Defintion~8 and Proposition~9]{Katsikis2006}.

Every two disjoint elements in $X$ are clearly D-disjoint, but the converse implication is not true, in general; this can, for instance, again be seen by considering the four ray cone in $\bbR^3$:

\begin{example} \label{ex:D-disjoint-does-not-imply-disjoint}
	Let $X = \mathbb{R}^3$ and let $X_+$ denote the four ray cone from Example~\ref{ex:four-ray-cone-bands}; let $v_1$ and $v_2$ denote the vectors given in the same example. According to Example~\ref{ex:non-disjoint-bands-with-trivial-intersection} the vectors $v_1$ and $v_2$ are not disjoint. However, both elements $v_1$ and $v_2$ are so-called \emph{atoms} in $X$ (see \cite[Definition~27 and Proposition~28]{KalauchPreprint4} or Subsection~\ref{subsection:criteria-in-terms-of-other-concepts-of-disjointness} below), so it follows that $v_1$ and $v_2$ are $D$-disjoint.
\end{example}

Hence, disjointness of two vectors $x,y \in X_+$ is, in general, a much stronger property than D-disjointness. There are, however, spaces in which both notions coincide; this gives rise to part (a) of the following definition.

\begin{definition} \label{def:pervasive-and-weakly-pervasive}
	\begin{enumerate}[(a)]
		\item The pre-Riesz space $X$ is called \emph{weakly pervasive} if any two D-disjoint vectors in $X_+$ are disjoint.
		
		\item The pre-Riesz space $X$ is called \emph{pervasive} if for every $b \in X$ such that $b \not\le 0$ there exists $x \in X_+ \setminus \{0\}$ such that every positive upper bound of $b$ is also an upper bound of $x$.
	\end{enumerate}
\end{definition}

The concept of a weakly pervasive pre-Riesz space was coined in \cite[Definition~8 and Lemma~9]{Kalauch2019a}. The usual definition of a pervasive pre-Riesz space in the literature is somewhat different and employs the Riesz completion of $X$ (see \cite[Definition~2.8.1]{Kalauch2019}). However, this definition is equivalent to the one given above according to \cite[Theorem~7]{Kalauch2019a}.

If one uses that two vectors $x,y \in X_+$ are disjoint if and only if they have infimum $0$, it is easy to show that every pervasive pre-Riesz space is also weakly pervasive. Moreover, every vector lattice is pervasive and hence weakly pervasive.

We also note that every pre-Riesz space with the Riesz decomposition property is weakly pervasive \cite[Proposition~11]{Kalauch2019a}; hence, weakly pervasive spaces are a simultaneous generalisation of pervasive pre-Riesz spaces and pre-Riesz spaces with the Riesz decomposition property.

Let us give a simple criterion in order to check that several function spaces are pervasive.

\begin{proposition} \label{prop:subspaces-of-continuous-functions-which-are-weakly-pervasive}
	Let $\Omega \subseteq \bbR^d$ be open, let $\Omega \subseteq L \subseteq \overline{\Omega}$ and let $X$ be a directed vector subspace of $C(L)$ (where $C(L)$ denotes the space of all real-valued continuous functions on $L$). Then $X$ is a pre-Riesz space; if, in addition, $X$ contains all test functions on $\Omega$, then $X$ is pervasive.
\end{proposition}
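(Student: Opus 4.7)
The plan is to handle the two assertions separately. For the first, I would note that $C(L)$ with pointwise order is an Archimedean vector lattice, so its positive cone is generating and the Archimedean axiom holds. Since $X \subseteq C(L)$ is directed by assumption, $X_+ = X \cap C(L)_+$ generates $X$; and since the partial order on $X$ is inherited from $C(L)$, an inequality $nx \le y$ in $X$ for all $n \in \bbN$ translates to the same inequality in $C(L)$, which forces $x \le 0$ in $C(L)$ and hence in $X$. Thus $X$ is Archimedean with generating cone, so by \cite[Theorem~3.3]{Gaans2006} it is a pre-Riesz space.

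For pervasiveness, let $b \in X$ with $b \not\le 0$. Unwinding the definition of the inherited order, $b \not\le 0$ means that there exists $t_0 \in L$ with $b(t_0) > 0$. The key step is to upgrade this to a point in $\Omega$: by continuity the set $\{t \in L : b(t) > 0\}$ is open in $L$, and since $L \subseteq \overline{\Omega}$ every non-empty open subset of $L$ meets $\Omega$. Hence there exists $t_1 \in \Omega$ with $b(t_1) > 0$, and shrinking we may find a closed ball $\overline{B(t_1, r)} \subseteq \Omega$ together with a constant $c > 0$ such that $b \ge c$ on $\overline{B(t_1, r)}$.

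Now I would choose a test function $\varphi \in C_c^\infty(\Omega)$ with $\operatorname{supp} \varphi \subseteq B(t_1, r)$, $\varphi \ge 0$, $\varphi(t_1) > 0$, and $\|\varphi\|_\infty \le c$; extending $\varphi$ by zero to all of $L$ gives a continuous function on $L$, and by assumption this extension lies in $X$. By construction $0 \le \varphi \le b$ pointwise on $L$, so in particular $\varphi \in X_+ \setminus \{0\}$. If $u \in X_+$ is any positive upper bound of $b$, then pointwise $u \ge b \ge \varphi$, so $u$ is also an upper bound of $\varphi$. Taking $x := \varphi$ verifies the defining property of a pervasive pre-Riesz space.

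The main obstacle is the step of replacing a point of $L$ where $b$ is positive by a point of $\Omega$ where $b$ is positive: pervasiveness concerns the order on $X$, whose elements are continuous on all of $L$, while our witnesses must be test functions supported in $\Omega$. Once this small topological observation is made via $L \subseteq \overline{\Omega}$, the rest is a routine bump-function construction.
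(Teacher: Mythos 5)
Your treatment of the first assertion and your topological reduction (using the density of $\Omega$ in $L$ to find a point $t_1 \in \Omega$ with $b(t_1) > 0$, and then a closed ball in $\Omega$ on which $b \ge c > 0$) are both correct and consistent with the paper's argument. However, the final step contains a genuine error: the claim that ``by construction $0 \le \varphi \le b$ pointwise on $L$'' is false in general. Outside the ball $\overline{B(t_1,r)}$ the function $\varphi$ vanishes, but $b$ may well be strictly negative there -- and this is exactly the interesting case, since the hypothesis is only $b \not\le 0$, not $b \ge 0$. (Take $L = [0,1]$, $\Omega = (0,1)$ and $b(t) = t - \tfrac{1}{2}$ for a concrete instance.) Consequently the chain ``$u \ge b \ge \varphi$'' breaks down at its second link.

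The repair is short but requires one additional idea, which is precisely the one the paper uses: compare $\varphi$ with the positive part $b^+ = b \vee 0$ taken in the vector lattice $C(L)$, rather than with $b$ itself. Your $\varphi$ does satisfy $0 \le \varphi \le b^+$ everywhere on $L$ (on the ball because $\varphi \le c \le b = b^+$ there, and off the ball because $\varphi = 0 \le b^+$). Moreover, any \emph{positive} upper bound $u \in X_+$ of $b$ satisfies $u \ge b$ and $u \ge 0$, hence $u \ge b^+$ in $C(L)$, and therefore $u \ge \varphi$. Note that the definition of pervasiveness quantifies only over positive upper bounds of $b$, so the positivity of $u$ is available exactly where you need it. With this correction your argument coincides with the paper's proof, which phrases the existence of the witness slightly more abstractly (``there exists a positive non-zero test function $x$ on $\Omega$ such that $x \le b^+$'') but otherwise proceeds identically.
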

\begin{proof}
	As $C(L)$ is Archimedean, so is $X$, and since $X_+$ is generating in $X$ by assumption, it follows that $X$ is a pre-Riesz space. 
	
	Now assume that $X$ contains all test functions on $\Omega$. Let $b \in X$ and $b \not\le 0$. Since $C(L)$ is a vector lattice, we can take the positive part $b^+$ in $C(L)$. This is a non-zero positive continuous function on $L$, so there exists a positive non-zero test function $x$ on $\Omega$ such that $x \le b^+$. We note that $x \in X$ by assumption. Now, if $u \in X_+$ is an upper bound of $b$ in $X$, then it is also an upper bound of $b^+$ in $C(L)$. Hence, $u \ge x$.
\end{proof}

As a consequence of the above proposition we obtain, for instance, the following examples of pervasive spaces.

\begin{examples} \label{ex:list-of-weakly-pervasive-spaces}
	(a) Let $\emptyset \not= \Omega \subseteq \bbR^d$ be open and bounded and let $k \in \mathbb{N}_0$. Then the space $C_b^k(\overline{\Omega})$ of functions that are $k$-times continuously differentiable on $\Omega$ and whose partial derivatives up to order $k$ all have a continuous extension to $\overline{\Omega}$ is pervasive.
	
	(b) Let $\emptyset \not= \Omega \subseteq \bbR^d$ be open and bounded with Lipschitz boundary, let $p \in [1,\infty]$ and $k \in \bbN$ such that $kp > d$. Then the positive cone in the Sobolev space $W^{k,p}(\Omega)$ is closed (with respect to the usual Sobolev norm), and it is also generating (see \cite[Examples~2.3(c) and~(d)]{Arendt2009}); hence, $W^{k,p}(\Omega)$ is a pre-Riesz space. Moreover, $W^{k,p}(\Omega)$ is also pervasive since it embeds into $C(\Omega)$ and since it contains all test functions on $\Omega$.
\end{examples}

In \cite[Example~13]{Kalauch2019a} one can find an example of a pre-Riesz space that is not pervasive, but has the Riesz decomposition property and is thus weakly pervasive.

We now prove the main result of this subsection; it gives a sufficient criterion for a weakly pervasive pre-Riesz space to already be a vector lattice.

\begin{theorem}	\label{thm:weakly-pervasive-monotonically-complete-spaces-are-lattices}
	Assume that every non-empty totally ordered subset of $X$ that is bounded from above has a supremum. If $X$ is weakly pervasive, then $X$ is a lattice.
\end{theorem}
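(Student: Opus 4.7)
The plan is to prove that every pair $x,y \in X$ has a supremum by a two-step argument: first a Zorn's-lemma argument produces a \emph{minimal} upper bound, then the weakly pervasive hypothesis upgrades minimality to \emph{least}. Since $(X,X_+)$ is a pre-Riesz space (hence has generating cone), upper bounds of $\{x,y\}$ exist: writing $x = x_1 - x_2$ and $y = y_1 - y_2$ with $x_i,y_i \in X_+$, the element $x_1 + y_1$ dominates both $x$ and $y$. So the set $U(x,y) := \{u \in X : u \ge x,\ u \ge y\}$ is nonempty.

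Next I would apply Zorn's lemma to $U(x,y)$ with the induced order, seeking a minimal element. Let $T \subseteq U(x,y)$ be a nonempty chain. The set $-T$ is then a nonempty totally ordered subset of $X$ bounded above by $-x$, so by hypothesis it has a supremum $s \in X$. Then $-s$ is the infimum of $T$ in $X$, and from $t \ge x$ for every $t \in T$ one gets $-s \ge x$, and likewise $-s \ge y$; hence $-s \in U(x,y)$ is a lower bound of $T$ in $U(x,y)$. Zorn's lemma yields a minimal element $z \in U(x,y)$.

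The key step is to show that this minimal upper bound is actually the least one, and here weakly pervasive enters. Both $a := z-x$ and $b := z-y$ lie in $X_+$. I claim $a$ and $b$ are D-disjoint: if $c \in [0,a] \cap [0,b]$, then $z - c \ge x$ and $z - c \ge y$, so $z - c \in U(x,y)$, but $z - c \le z$, so minimality of $z$ forces $c = 0$. Since $X$ is weakly pervasive, $a \perp b$, and because both are positive this means (by the equivalence recalled in Section~\ref{section:introduction}) that $\inf\{a,b\} = 0$ in $X$. Now for an arbitrary $z' \in U(x,y)$, the element $d := z - z'$ satisfies $d \le z - x = a$ and $d \le z - y = b$, so $d$ is a lower bound of $\{a,b\}$ and therefore $d \le 0$, i.e.\ $z \le z'$. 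Thus $z = x \vee y$ exists. Applying the same argument to $-x$ and $-y$ yields $x \wedge y = -\big((-x) \vee (-y)\big)$, so $X$ is a lattice.

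The only subtle point is the Zorn step, where one has to invoke the hypothesis on totally ordered sets in the \emph{dual} direction (infima of chains in $U(x,y)$, obtained by passing to negatives); the conceptual heart of the proof is the short observation that minimality of an upper bound $z$ already forces D-disjointness of $z-x$ and $z-y$, which is exactly the hypothesis the weakly pervasive assumption is tailor-made to convert into genuine disjointness.
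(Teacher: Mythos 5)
Your proof is correct, and it is essentially the order-dual of the paper's argument: the paper fixes $x,y \in X_+$, uses Zorn's lemma (with the chain-supremum hypothesis) to produce a \emph{maximal} element $a$ of $[0,x]\cap[0,y]$, shows $[0,x-a]\cap[0,y-a]=\{0\}$, and then invokes weak pervasiveness to conclude that $x-a$ and $y-a$ have infimum $0$ -- exactly mirroring your minimal upper bound $z$ and the D-disjointness of $z-x$ and $z-y$. There are no gaps; the only cosmetic difference is that the paper first reduces to positive elements while you work with general $x,y$ and pass to negatives inside the Zorn step.
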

\begin{proof}
	If suffices to show that any two positive elements in $X$ have an infimum, so let $x,y \in X_+$. It follows from Zorn's lemma and from the assumption on $X$ that the set $[0,x] \cap [0,y]$ has a maximal element $a$. Let us show that $[0,x-a] \cap [0,y-a] = \{0\}$: if $z$ is an element of this set, then $0 \le z \le x-a$ and $0 \le z \le y-a$, so $0 \le z+a \le x$ and $0 \le z+a \le y$. Hence, $z+a$ is an element of $[0,x] \cap [0,y]$ that dominates $a$; it thus follows from the maximality of $a$ that $z = 0$.
	
	As $X$ is weakly pervasive, this implies that the positive vectors $x-a$ and $y-a$ are disjoint, i.e.\ they have infimum $0$. Consequently, $x$ and $y$ also have an infimum (namely $a$).
\end{proof}

In the context of ordered Banach spaces, the following proposition gives sufficient criteria for the first assumption of Theorem~\ref{thm:weakly-pervasive-monotonically-complete-spaces-are-lattices} to be satisfied.

\begin{proposition} \label{prop:regularity-properties-of-ordered-banach-spaces}
	Assume that the pre-Riesz space $X$ is an ordered Banach space. Consider the following assertions:
	\begin{enumerate}[\upshape (i)]
		\item The cone $X_+$ is normal and the space $X$ is reflexive.
		
		\item The cone $X_+$ is normal and $X$ is a projection band in its bi-dual (compare Example~\ref{ex:ordered-banach-space-band-in-bidual}).
		
		\item Every order interval in $X$ is weakly compact.
		
		\item $X$ is the dual space of an ordered Banach space $Y$ such that $Y$ has generating cone.
		
		\item The norm is additive on $X_+$ (i.e., $\lVert x+y\rVert = \lVert x \rVert + \lVert y \rVert$ for all $x,y \in X_+$).
		
		\item Every increasing norm bounded net in $X_+$ is norm convergent.
		
		\item The cone $X_+$ is normal and every increasing net in $X_+$ that is bounded from above is norm convergent.
		
		\item Every non-empty upwards directed set in $X$ that is bounded above has a supremum (i.e.,\ in the terminology of Subsection~\ref{subsection:the-intersection-of-arbitrarily-many-projection-bands}, $X$ is Dedekind complete).
		
		\item Every non-empty totally ordered set in $X$ that is bounded above has a supremum.
	\end{enumerate}
	Then the following implications hold: \\
	\centerline{
		\xymatrix{
			&
			{\text{\upshape(i)}}
			\ar@{=>}[d]
			\ar@{=>}[drr]
			&
			&
			&
			\\
			&
			{\text{\upshape(ii)}}
			\ar@{=>}[r]
			\ar@{=>}[d]
			&
			{\text{\upshape(iii)}}
			\ar@{=>}[d]
			&
			{\text{\upshape(iv)}}
			\ar@{=>}[d]
			&
			\\
			{\text{\upshape(v)}}
			\ar@{=>}[r]
			&
			{\text{\upshape(vi)}}
			\ar@{=>}[r]
			&
			{\text{\upshape(vii)}}
			\ar@{=>}[r]
			&
			{\text{\upshape(viii)}}
			\ar@{<=>}[r]
			&
			{\text{\upshape(ix)}}
		} 
	} 
\end{proposition}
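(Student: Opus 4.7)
The plan is to verify each arrow of the diagram, grouping them by technique and invoking standard facts from the theory of ordered Banach spaces (e.g.\ \cite{Aliprantis2007}) whenever convenient.

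First I would dispatch the easy implications. For (v) $\Rightarrow$ (vi), additivity of the norm on $X_+$ gives $\lVert x_\beta - x_\alpha\rVert = \lVert x_\beta\rVert - \lVert x_\alpha\rVert$ whenever $\alpha \le \beta$, so a norm-bounded increasing net in $X_+$ is Cauchy. For (vi) $\Rightarrow$ (vii), normality turns order-boundedness into norm-boundedness. For (vii) $\Rightarrow$ (viii), an upwards directed bounded set $A$ viewed as a net becomes, after translation, an increasing net in $X_+$ that converges in norm; closedness of $X_+$ identifies the limit as $\sup A$. The equivalence (viii) $\Leftrightarrow$ (ix) is a Zorn's-lemma exercise. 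The dual-space implication (iv) $\Rightarrow$ (viii) is standard: with $X = Y^*$ and $Y_+$ generating, the supremum of a directed set $A \subseteq X$ bounded above by $x$ is built by setting $\varphi(y) := \sup_{a \in A}\langle a, y\rangle$ on $Y_+$, extending linearly to $Y = Y_+ - Y_+$, and using that a positive linear functional on an ordered Banach space with closed generating cone is automatically bounded.

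Next I would handle the reflexive implications. The arrow (i) $\Rightarrow$ (ii) is trivial because $X = X''$ is a projection band in itself, and (i) $\Rightarrow$ (iii) follows because order intervals in a reflexive space with closed normal cone are norm-closed and bounded, hence weakly compact. For (iii) $\Rightarrow$ (vii), weak compactness of $[-x,x]$ forces normality of $X_+$. Given an increasing net $(x_\alpha) \subseteq [0,x]$, weak compactness yields a weak cluster point $y$; monotonicity plus weak-closedness of $X_+$ force the whole net to converge weakly to $y = \sup\{x_\alpha\}$ (any two cluster points must both equal this supremum and hence coincide). To pass to norm convergence I invoke Mazur's theorem: since $y$ lies in the norm closure of $\operatorname{conv}\{x_\alpha\}$, there are finite convex combinations $z_n \to y$ in norm with $z_n \le x_{\alpha_n}$ for some $\alpha_n$; then $0 \le y - x_{\alpha_n} \le y - z_n$ and normality give $\lVert y - x_{\alpha_n}\rVert \to 0$, and monotonicity propagates this to the full net.

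The delicate arrows (ii) $\Rightarrow$ (iii) and (ii) $\Rightarrow$ (vi) involve the band projection $P \colon X'' \to X$. The core identity $[0,x]_X = P\bigl([0,x]_{X''}\bigr)$ reduces weak compactness of the left-hand side in $X$ to weak-$*$ compactness of the right-hand side in $X''$ (supplied by Alaoglu, since $X''_+$ is weak-$*$ closed and normal because $X_+$ is), provided one can show that $P$ is continuous from the weak-$*$ topology of $X''$ to the weak topology of $X$. This weak-$*$-to-weak continuity of $P$ is the pre-Riesz analogue of the classical KB-space fact and requires a careful analysis of the interaction between the bi-dual order structure and the weak-$*$ topology. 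Granted (ii) $\Rightarrow$ (iii), the remaining arrow (ii) $\Rightarrow$ (vi) follows by the chain (ii) $\Rightarrow$ (iii) $\Rightarrow$ (vii) $\Rightarrow$ (vi). The main obstacle I anticipate is precisely this weak-$*$-to-weak continuity step: the Banach lattice version is folklore, but the general pre-Riesz setting requires a fresh argument that exploits the order-theoretic characterisations of disjointness and of band projections from Section~\ref{section:band-projections}.
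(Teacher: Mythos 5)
Most of your arrows are fine and essentially match the paper's own argument: (v)$\Rightarrow$(vi), (vii)$\Rightarrow$(viii), (viii)$\Leftrightarrow$(ix) (where the nontrivial direction (ix)$\Rightarrow$(viii) is a genuine order-theoretic theorem --- chains having suprema implies directed sets having suprema --- rather than a routine Zorn exercise, but it is a citable known result), (iv)$\Rightarrow$(viii) (your explicit functional $\varphi(y)=\sup_{a\in A}\langle a,y\rangle$ is exactly the pointwise weak${}^*$ limit the paper uses), (i)$\Rightarrow$(ii), (i)$\Rightarrow$(iii), and (iii)$\Rightarrow$(vii), where your Mazur argument is a self-contained re-proof of the Dini-type theorem that the paper simply cites from Schaefer. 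One small gap there: in (vi)$\Rightarrow$(vii) you \emph{use} normality to convert order-boundedness into norm-boundedness, but normality is not a hypothesis of (vi); it has to be derived first (norm convergence of increasing norm-bounded sequences forces $X_+$ to be normal, which is the fact the paper quotes from the ordered-Banach-space literature).

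The genuine problems are the two arrows out of (ii). First, your route to (ii)$\Rightarrow$(iii) hinges on the band projection $P\colon X''\to X$ being weak${}^*$-to-weak continuous, and this is false even in the classical Banach lattice case: for $X=L^1[0,1]$ the sequence $f_n := n\cdot\one_{[0,1/n]}$ converges weak${}^*$ in $X''$ (along a subnet) to a singular functional $\mu$ with $P\mu=0$, yet $Pf_n=f_n$ does not converge weakly to $0$ because $\int f_n=1$. The continuity is also unnecessary: since $X$ is a band in $X''$, the solidity property of bands ($0\le z\le x$ with $x$ in the band forces $z$ into the band) gives $[0,x]_{X''}=[0,x]_X$ for $x\in X_+$; this set is weak${}^*$-compact (it is weak${}^*$-closed and norm bounded because $X''_+$ is normal), and the weak${}^*$ topology of $X''$ restricts on $X$ to the weak topology of $X$ --- no mapping under $P$ is needed. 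Second, you deduce (ii)$\Rightarrow$(vi) from the chain (ii)$\Rightarrow$(iii)$\Rightarrow$(vii)$\Rightarrow$(vi), but (vii)$\Rightarrow$(vi) is not among the asserted implications and is false: in $c_0$ every order interval is weakly compact (so (vii) holds), yet the increasing norm-bounded sequence $s_n=\sum_{k\le n}e_k$ does not converge, so (vi) fails; the point is that a norm-bounded increasing net need not be bounded above. The implication (ii)$\Rightarrow$(vi) has to be proved directly, and this is where the projection-band hypothesis really earns its keep: a norm-bounded increasing net $(x_j)$ in $X_+$ has a weak${}^*$ limit $x''\in X''$, the inequalities $Px''\le x''$ and $Px''\ge Px_j=x_j$ for all $j$ force $Px''=x''$, hence $x''\in X$, and then weak convergence upgrades to norm convergence by the same Dini-type theorem as before.
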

\begin{proof}
	``(i) $\Rightarrow$ (ii)'' This is obvious.

	``(i) $\Rightarrow$ (iv)'' This is obvious.
		
	``(ii) $\Rightarrow$ (iii)'' This was proved in \cite[Proposition~2.6]{GlueckWolffLB}.
		
	``(ii) $\Rightarrow$ (vi)'' The proof of this implication has already been sketched in \cite[Remark~6.2]{GlueckWolffLB}; we give a few more details here: 
	
	Let $P: X'' \to X''$ be the band projection with range $X$ and let $(x_j)$ be an increasing and norm-bounded net in $X_+$. Then $(x_j)$ converges to a vector $x'' \in X''_+$ with respect to the weak${}^*$-topology. We have $Px'' \le x''$. On the other hand, $Px'' \ge Px_j = x_j$ for each index $j$, which implies that $Px'' \ge x''$. We have thus shown that $Px'' = x''$, i.e., $x := x''$ is an element of $X$.
	
	The increasing net $(x_j)$ converges weakly to $x$, so it follows from \cite[Theorem~V.4.3]{Schaefer1999} that $(x_j)$ actually converges in norm to $x$.
	
	``(iii) $\Rightarrow$ (vii)'' Assertion~(iii) implies that every order interval in $X$ is bounded; hence, the cone $X_+$ is normal. Now, let $(x_j)$ is an increasing net in $X_+$ that is bounded above. Then $(x_j)$ is contained in an order interval. Hence, $(x_j)$ is weakly convergent and therefore also norm convergent according to \cite[Theorem~V.4.3]{Schaefer1999}.
	
	``(iv) $\Rightarrow$ (viii)'' Let $A \subseteq X$ be a non-empty upwards directed set that is bounded above. Then the increasing net $(a)_{a \in A}$ is weak${}^*$-convergent to an element $x \in X$, and one readily checks that $x$ is the supremum of $A$.
	
	``(v) $\Rightarrow$ (vi)'' Let $(x_j) \subseteq X_+$ be an increasing norm bounded net. We show that this net is Cauchy and thus norm convergent. To this end, set $\alpha := \sup_j \|x_j\| \in [0,\infty)$ and let $\varepsilon > 0$. Choose $j_0$ such that $\|x_{j_0}\| \ge \alpha - \varepsilon$. For all indices $j \ge j_0$ we then obtain
	\begin{align*}
		\alpha \ge \|x_j\| = \|x_{j_0}\| + \|x_j - x_{j_0}\| \ge \alpha - \varepsilon + \|x_j - x_{j_0}\|
	\end{align*}
	so $\|x_j - x_{j_0}\| \le \varepsilon$. This proves that $(x_j)$ is indeed Cauchy.
	
	``(vi) $\Rightarrow$ (vii)'' As every increasing norm-bounded sequence in $X_+$ is norm convergent, it follows that $X_+$ is normal; see for instance \cite[Theorem~2.45]{Aliprantis2007}. Hence, every increasing net in $X_+$ which is bounded above is also norm bounded and thus norm convergent according to~(vi).
	
	``(vii) $\Rightarrow$ (viii)'' Let $D \subseteq X$ be a non-empty upwards directed set which is bounded above by a vector $u \in X$. Choose $b \in X_+$ such that $b+D$ intersects the positive cone $X_+$. Then $\tilde D := X_+ \cap (b+D)$ is an upwards directed set, too, and $\tilde D$ is bounded above by $u+b$. Hence, the increasing net $(x)_{x \in \tilde D}$ converges to a vector $y \in X$. Clearly, $y$ is the supremum of $\tilde D$, and thus it is also the supremum of $b+D$ (here we used again that $b+D$ is directed). Therefore, $D$ has the supremum $y-b$.

	``(viii) $\Rightarrow$ (ix)'' This is obvious.
	 
	``(ix) $\Rightarrow$ (viii)'' A general result in the theory of ordered sets says that, if every non-empty totally ordered subset of a partially ordered set $Z$ has a supremum in $Z$, then every non-empty upwards directed subset of $Z$ has a supremum in $Z$, too; see for instance \cite[Proposition~1.5.9]{Cohn1981}. We can apply this to the partially set
	\begin{align*}
		Z := X \cup \{\infty\},
	\end{align*}
	where we define $\infty$ as an object that is larger than each element of $X$: 
	
	It follows from~(ix) that every non-empty totally ordered subset of $X \cup \{\infty\}$ has a supremum in $X \cup \{\infty\}$. So if $D \subseteq X$ is non-empty, directed and bounded above by an element $u \in X$, then we first conclude that $D$ has a supremum $s$ in $X \cup \{\infty\}$; since $D$ is bounded above by $u$, it follows that $s \le u$, so in particular, $s \in X$. Now one can immediately check that $s$ is also the supremum of $D$ within $X$.
\end{proof}

As a consequence of Theorem~\ref{thm:weakly-pervasive-monotonically-complete-spaces-are-lattices} we observe that if the pre-Riesz space $X$ is a weakly pervasive ordered Banach space and satisfies at least one of the assertions~(i)--(ix) in Proposition~\ref{prop:regularity-properties-of-ordered-banach-spaces}, then $X$ is actually a vector lattice. Since every finite-dimensional Banach space is reflexive and every generating closed cone in such a space is normal, we obtain in particular the following corollary.

\begin{corollary} \label{cor:finite-dimensional-weakly-pervasive-spaces}
	Let $X$ be finite dimensional and assume that $X_+$ is closed. If $X$ is weakly pervasive, then $X$ is a vector lattice (and thus isomorphic to $\mathbb{R}^n$ with the standard order).
\end{corollary}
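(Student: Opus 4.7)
The plan is to derive the corollary directly from Theorem~\ref{thm:weakly-pervasive-monotonically-complete-spaces-are-lattices}, once I have verified that its hypothesis on totally ordered bounded sets is automatically satisfied in the present setting. The natural route is through Proposition~\ref{prop:regularity-properties-of-ordered-banach-spaces}, so I would first turn $X$ into an ordered Banach space.

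First I would endow $X$ with any Banach space norm (unique up to equivalence since $\dim X < \infty$). Because $X$ is a pre-Riesz space the cone $X_+$ is generating, and by assumption it is closed; hence $X$ becomes an ordered Banach space. Finite-dimensional Banach spaces are reflexive, and I would observe that a closed generating cone in a finite-dimensional space is automatically normal (this follows, e.g., from compactness of the unit sphere together with $X_+ \cap (-X_+) = \{0\}$). Consequently assertion~(i) of Proposition~\ref{prop:regularity-properties-of-ordered-banach-spaces} holds.

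Chasing through the implication diagram (i)~$\Rightarrow$~(iii)~$\Rightarrow$~(vii)~$\Rightarrow$~(viii)~$\Leftrightarrow$~(ix) of that proposition then gives me assertion~(ix): every non-empty totally ordered subset of $X$ that is bounded from above has a supremum. Since $X$ is assumed weakly pervasive, Theorem~\ref{thm:weakly-pervasive-monotonically-complete-spaces-are-lattices} now yields that $X$ is a lattice. Being an ordered Banach space it is Archimedean, so $X$ is actually an Archimedean vector lattice; the classical structure theorem for finite-dimensional Archimedean vector lattices then shows that $X$ is linearly order isomorphic to $\bbR^n$ with its standard cone, where $n = \dim X$.

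There is essentially no hard step here: the substantial work has been done in Theorem~\ref{thm:weakly-pervasive-monotonically-complete-spaces-are-lattices} and in Proposition~\ref{prop:regularity-properties-of-ordered-banach-spaces}. The only minor point one has to notice is the finite-dimensional fact that \emph{closed plus generating} implies \emph{normal}, which lets one feed the hypotheses of Proposition~\ref{prop:regularity-properties-of-ordered-banach-spaces}(i) without any further assumption on the cone.
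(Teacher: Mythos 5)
Your argument is correct and is exactly the route the paper takes: equip $X$ with a (necessarily equivalent) Banach space norm, note that reflexivity and normality of the closed generating cone are automatic in finite dimensions so that Proposition~\ref{prop:regularity-properties-of-ordered-banach-spaces}(i) holds, and then apply Theorem~\ref{thm:weakly-pervasive-monotonically-complete-spaces-are-lattices}. No issues.
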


We note once again that, for the special case where $X$ is pervasive, Corollary~\ref{cor:finite-dimensional-weakly-pervasive-spaces} has been recently proved in \cite[Theorem~39]{KalauchPreprint4}. Let us remark a few further consequences of Theorem~\ref{thm:weakly-pervasive-monotonically-complete-spaces-are-lattices} in conjuction with Proposition~\ref{prop:regularity-properties-of-ordered-banach-spaces}.

\begin{remarks}
	\begin{enumerate}[(a)]
		\item Recall that several examples of ordered Banach spaces that are pervasive (and hence weakly pervasive) are listed in Examples~\ref{ex:list-of-weakly-pervasive-spaces}. Theorem~\ref{thm:weakly-pervasive-monotonically-complete-spaces-are-lattices} and Proposition~\ref{prop:regularity-properties-of-ordered-banach-spaces} show that such examples have to satisfy many restrictions if we do not want to end up in the category of vector lattices.
		
		\item If the pre-Riesz space $X$ is an ordered Banach space with normal cone, then the dual space $X'$ also has generating cone and is thus a pre-Riesz space. Proposition~\ref{prop:regularity-properties-of-ordered-banach-spaces} shows that every non-empty totally ordered set in $X'$ that is bounded above has a supremum. Hence, if $X'$ is weakly pervasive, it follows from Theorem~\ref{thm:weakly-pervasive-monotonically-complete-spaces-are-lattices} that $X'$ is in fact a vector lattice, and thus we conclude in turn that $X$ has the Riesz decomposition property \cite[Theorem~2.47]{Aliprantis2007}.
		
		Hence, the dual space $X'$ of an ordered Banach space $X$ with normal (and generating) cone cannot be weakly pervasive unless $X$ itself has the Riesz decomposition property. This suggests that the property ``weakly pervasive'' is not particularly well-behaved with respect to duality (at least not in the category of ordered Banach spaces).
	\end{enumerate}
\end{remarks}

\subsection{Criteria in terms of other concepts of disjointness} \label{subsection:criteria-in-terms-of-other-concepts-of-disjointness}

Recall that weakly pervasive spaces are precisely those pre-Riesz spaces in which any two D-disjoint elements of the positive cone are automatically disjoint. In this context is is interesting to observe that, in general pre-Riesz spaces, there exists an intermediate concept between disjointness and D-disjointness; this is the content of the following proposition.

\begin{proposition} \label{prop:intermediate-between-disjoint-and-d-disjoint}
	For all $x,y \in X_+$ we have the following implications:
	\begin{align*}
		& x\text{ and } y \text{ are disjoint} \\
		& \qquad \qquad \Rightarrow \quad [-x,x] \cap [-y,y] = \{0\} \\
		& \qquad \qquad \qquad \qquad \Rightarrow \quad x\text{ and } y \text{ are D-disjoint.}
	\end{align*}
\end{proposition}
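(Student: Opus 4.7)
The plan is to handle the two implications separately, with the harder one being the first.

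The second implication is essentially tautological: since $0 \le x$ and $0 \le y$, we have the inclusion $[0,x] \cap [0,y] \subseteq [-x,x] \cap [-y,y]$, so if the right-hand side is $\{0\}$, then so is the left-hand side, which is exactly the statement of $D$-disjointness of $x$ and $y$.

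For the first implication, I would use the characterisation of disjointness for positive elements recalled in the introduction (cited from \cite[Proposition~2.1]{Glueck2019}): if $x,y \in X_+$ are disjoint, then the infimum of $\{x,y\}$ exists in $X$ and equals $0$. Now take any $z \in [-x,x] \cap [-y,y]$. The inequalities $z \le x$ and $z \le y$ say that $z$ is a lower bound of $\{x,y\}$, while $-x \le z$ and $-y \le z$ (equivalently $-z \le x$, $-z \le y$) say that $-z$ is also a lower bound of $\{x,y\}$. Since $\inf\{x,y\} = 0$, both $z \le 0$ and $-z \le 0$, which forces $z = 0$ by antisymmetry of $\le$.

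There is no substantial obstacle: both steps are direct once one invokes that disjointness of positive elements is the same as their infimum being zero. The only point that requires attention is writing the two directions symmetrically so that both $z$ and $-z$ appear as lower bounds, which is what cleanly squeezes $z$ to $0$.
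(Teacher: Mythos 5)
Your proof is correct and follows essentially the same route as the paper: the paper also deduces $z \le 0$ and $-z \le 0$ from the fact that both $z$ and $-z$ are lower bounds of $\{x,y\}$ (using that disjoint positive elements have infimum $0$), and dismisses the second implication as obvious via the inclusion of order intervals.
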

\begin{proof}
	Assume first that $x$ and $y$ are disjoint. If $f \in [-x,x] \cap [-y,y]$, then $f$ is a lower bound of $x$ and $y$, so $f \le 0$. Moreover, $-f$ is also a lower bound of $x$ and $y$, so $-f \le 0$. Hence, $f= 0$. The second implication is obvious.
\end{proof}

We will see in Example~\ref{ex:disjointness-implication-cannot-be-reversed} below that none of the two implications in Proposition~\ref{prop:intermediate-between-disjoint-and-d-disjoint} can be reversed in general pre-Riesz spaces. Before we give this example, we need a small auxiliary result.

We recall from \cite[Definition~27]{KalauchPreprint4} that an element $a \in X_+ \setminus \{0\}$ is called an \emph{atom} in $X$ if every vector $x \in [0,a]$ is a multiple of $a$; equivalently, the order interval $[0,a]$ equals the line segment $\{\lambda a: \; \lambda \in [0,1]\}$.

\begin{lemma} \label{lem:symmetric-line-segments-for-atom}
	Let $a$ be an atom in $X$. Then the order interval $[-a,a]$ equals the line segment $\{\lambda a: \; \lambda \in [-1,1]\}$.
\end{lemma}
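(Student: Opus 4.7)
The plan is to reduce the statement to the defining property of an atom via an elementary affine shift that maps $[-a,a]$ onto $[0,a]$ after scaling by $1/2$. One inclusion requires no work at all: if $\lambda \in [-1,1]$, then $(1-\lambda)a$ and $(1+\lambda)a$ are non-negative multiples of $a$ and thus lie in $X_+$, which is exactly the statement $-a \le \lambda a \le a$.

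For the reverse inclusion I would take an arbitrary $x \in [-a,a]$ and consider the vector $u := \tfrac{1}{2}(a + x)$. From $-a \le x \le a$ one obtains $0 \le a + x \le 2a$, and because $X_+$ is a cone (hence invariant under multiplication by the positive scalar $1/2$), these inequalities survive division by $2$ to give $u \in [0,a]$. At this point the assumption that $a$ is an atom supplies a scalar $\mu \in [0,1]$ with $u = \mu a$, so that
\begin{align*}
    x \;=\; 2u - a \;=\; (2\mu - 1)\,a,
\end{align*}
with $2\mu - 1 \in [-1,1]$, as required.

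No genuine obstacle seems to arise; the only conceptual point worth noting is that dividing the chain $0 \le a+x \le 2a$ by $2$ is legitimate in any ordered vector space (whose positive cone is by definition invariant under multiplication by $[0,\infty)$). This is precisely what lets us translate a question about the symmetric order interval $[-a,a]$ into the language of the one-sided order interval $[0,a]$ appearing in the definition of an atom. In particular, no appeal to the Riesz completion, to suprema/infima, or to the pre-Riesz property itself is needed: the conclusion is already forced by the defining property of $a$ together with the linear structure.
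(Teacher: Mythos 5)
Your argument is correct and is essentially the paper's own proof: both reduce the claim to the atom property of $[0,a]$ via the observation that $x \in [-a,a]$ forces $\tfrac{1}{2}(a+x) \in [0,a]$ (the paper phrases this as $x = \tfrac{a+x}{2} - \tfrac{a-x}{2}$, hence $[-a,a] = [0,a] - [0,a]$, which is the same affine shift). Nothing is missing.
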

\begin{proof}
	Each $x \in [-a,a]$ can be written as
	\begin{align*}
		x = \frac{a+x}{2} - \frac{a-x}{2},
	\end{align*}
	where both $\frac{a+x}{2}$ and $\frac{a-x}{2}$ are elements of $[0,a]$; hence, we have $[-a,a] = [0,a] - [0,a]$ (for this, we did not use that $a$ is an atom). Since $[0,a]$ is the line segment $\{\lambda a: \; \lambda \in [0,1]\}$, this implies the assertion.
\end{proof}

\begin{example} \label{ex:disjointness-implication-cannot-be-reversed}
	Let $X = \bbR^3$, let $X_+$ denote the four ray cone from Example~\ref{ex:four-ray-cone-bands}, and let $v_1,\dots,v_4 \in X$ be the vectors from that example.
	\begin{enumerate}[(a)]
		\item Let $w = v_1+v_2$ and $\tilde w = v_3 + v_4$. Then $w$ and $\tilde w$ are $D$-disjoint, but the set $[-w,w] \cap [-\tilde w, \tilde w]$ is non-zero since it contains the vector $(1, -1, 0)^T$.
				
		\item The order interval $[-v_1,v_1]$ is precisely the line segment $\{\lambda v_1: \; \lambda \in [-1,1]\}$; this follows from Lemma~\ref{lem:symmetric-line-segments-for-atom} since $v_1$ is an atom in $X$ (which in turn follows from \cite[Proposition~28]{KalauchPreprint4}). Similarly, the order interval $[-v_2,v_2]$ is the line segment $\{\lambda v_2: \; \lambda \in [-1,1]\}$.
		
		We thus conclude that $[-v_1,v_1] \cap [-v_2,v_2] = \{0\}$. Yet, we have seen in Example~\ref{ex:non-disjoint-bands-with-trivial-intersection} that $v_1$ and $v_2$ are not disjoint.
	\end{enumerate}
\end{example}

We now consider pre-Riesz spaces in which, for all $x,y \in X_+$, the property $[-x,x] \cap [-y,y] = \{0\}$ implies that $x \perp y$. This property of a pre-Riesz space is (at least formally) weaker than being weakly pervasive. In finite dimensions, though, this property still suffices to conclude that a pre-Riesz space with closed cone is a vector lattice; we prove this in the following theorem.

\begin{theorem} \label{thm:weaker-version-of-weakly-pervasive-implies-vector-lattice}
	Let $X$ be finite dimensional and assume that $X_+$ is closed. Suppose that all vectors $x,y \in X_+$ that satisfy $[-x,x] \cap [-y, y] = \{0\}$ are disjoint. Then $X$ is a vector lattice.
\end{theorem}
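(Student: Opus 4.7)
The plan is to show that $X_+$ is a simplicial cone; once this is established, $X$ is linearly order isomorphic to $\bbR^n$ with its standard cone and is therefore a vector lattice.

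First, I would apply the hypothesis to pairs of atoms. If $a_1, a_2 \in X_+$ are atoms that are not positive scalar multiples of one another, then Lemma~\ref{lem:symmetric-line-segments-for-atom} gives $[-a_i, a_i] = \{t a_i : t \in [-1, 1]\}$ for $i = 1, 2$, and the linear independence of $a_1, a_2$ forces $[-a_1, a_1] \cap [-a_2, a_2] = \{0\}$. By the hypothesis of the theorem, $a_1 \perp a_2$. Thus any system of pairwise non-proportional atoms in $X$ is pairwise disjoint and, by Proposition~\ref{prop:disjoint-vectors-are-linearly-independent}, linearly independent; in particular, $X$ possesses at most $n := \dim X$ atoms up to positive scalar multiples.

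Next, since $X_+$ is a closed, pointed, convex cone in a finite-dimensional space, I would appeal to the Krein--Milman theorem for cones: $X_+$ equals the conic hull of its extreme rays. A direct order interval computation shows that the non-zero points on the extreme rays of $X_+$ are exactly the atoms of $X$ (indeed, $a \in X_+ \setminus \{0\}$ is an atom if and only if the ray $\{\lambda a : \lambda \ge 0\}$ is a one-dimensional face of $X_+$). Choosing one atom $v_i$ on each extreme ray therefore yields finitely many vectors $v_1, \dots, v_k$ with $k \le n$ such that $X_+ = \{\sum_{i=1}^k \lambda_i v_i : \lambda_1, \dots, \lambda_k \in [0, \infty)\}$ (this conic hull is automatically closed, since there are only finitely many generators). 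Because $X_+$ is generating, the $v_i$ span $X$, so $k \ge n$; hence $k = n$. The $n$ pairwise disjoint atoms $v_1, \dots, v_n$ thus form a basis of $X$ on which $X_+$ is described by non-negative coefficients, and the linear map sending each $v_i$ to the $i$-th standard basis vector of $\bbR^n$ is an order isomorphism from $X$ onto $\bbR^n$ with the standard cone.

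The main obstacle is the Krein--Milman step, which relies on standard facts about closed pointed convex cones in finite dimension rather than on results of the present paper; it can be obtained by applying the classical Krein--Milman theorem to a compact base of $X_+$ (cut out by an affine hyperplane on which a strictly positive linear functional on $X$ equals $1$). Every other step follows rather directly from Lemma~\ref{lem:symmetric-line-segments-for-atom}, Proposition~\ref{prop:disjoint-vectors-are-linearly-independent}, and the hypothesis of the theorem.
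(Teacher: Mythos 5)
Your proposal is correct and follows essentially the same route as the paper's proof: both identify the atoms with the non-zero points of the extreme rays of the closed cone, use Lemma~\ref{lem:symmetric-line-segments-for-atom} together with the hypothesis to conclude that atoms on distinct rays are disjoint, invoke Proposition~\ref{prop:disjoint-vectors-are-linearly-independent} to get linear independence, and balance this against the fact that the extreme rays generate $X_+$ to conclude that the cone is simplicial. The only cosmetic difference is that you make the Krein--Milman step and the final order isomorphism onto $\bbR^n$ explicit, whereas the paper cites these as standard facts.
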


For the proof we need the notion of an \emph{extreme ray}. Let $a \in X_+ \setminus \{0\}$. If the half ray $\{\lambda a: \; \lambda \in [0,\infty)\}$ is a face of $X_+$, then we call this half ray an extreme ray of $X_+$. We note that $\{\lambda a: \; \lambda \in [0,\infty)\}$ is an extreme ray of $X_+$ if and only if $a$ is an atom in $X$ (\cite[Proposition~28]{KalauchPreprint4}). If $X$ is finite dimensional and non-zero and $X_+$ is closed, then $X_+$ is the convex hull of its extreme rays.

\begin{proof}[Proof of Theorem~\ref{thm:weaker-version-of-weakly-pervasive-implies-vector-lattice}]
	Set $n := \dim X$; we may assume that $n \ge 1$. Let $E$ denote the set of all extreme rays of $X_+$ and for each $R \in E$, choose a non-zero vector $x_R \in R$. Then the set $\{x_R: \, R \in E\}$ spans $X$, so $E$ has at least $n$ elements.
	
	On the other hand, each point $x_R$ is an atom in $X$, so for any two distinct rays $R,S \in E$ we have $[-x_R,x_R] \cap [-x_S, x_S] = \{0\}$ according to Lemma~\ref{lem:symmetric-line-segments-for-atom}. Thus, it follows from the assumption that $x_S \perp x_R$ for any two distinct rays $R,S \in E$. Hence, we conclude from Proposition~\ref{prop:disjoint-vectors-are-linearly-independent} that the family of vectors $(x_R)_{R \in E}$ is linearly independent. Hence, $E$ has exactly $n$ elements. This proves that the positive cone $X_+$ is generated by exactly $n$ extreme rays, so $X$ is a vector lattice.
\end{proof}

\subsection*{Acknowledgements}

It is my pleasure to thank Anke Kalauch, Helena Malinowski and Onno van Gaans for various suggestions and discussions which helped me to considerably improve the paper.

I am also indebted to Andreas Blass for an answer on MathOverflow \cite{Blass2020} that was very helpful for the proof of the implication ``(ix) $\Rightarrow$ (viii)'' of Proposition~\ref{prop:regularity-properties-of-ordered-banach-spaces}.

This paper was originally motivated by the question how many bands and projection bands can exist in a finite dimensional pre-Riesz space -- a question I first became aware of during a plenary talk of Anke Kalauch at the \emph{Positivity X} conference that took place in July 2019 in Pretoria, South Africa. I am indebted to the organisers of the conference for financially supporting my participation.

\bibliographystyle{plain}
\bibliography{literature}

\end{document}